\newtheorem{thm}{Theorem}[section]
\newtheorem{prop}[thm]{Proposition}                          
\newtheorem{claim}[thm]{Claim}
\newtheorem{lem}[thm]{Lemma}
\theoremstyle{definition}
\newtheorem{defn}[thm]{Definition}
\newtheorem{rem}[thm]{Remark}
\newtheorem{exa}[thm]{Example}
\newcommand{\gl}{\operatorname{GL}}
\newcommand{\gL}{\mathfrak{gl}}
\newcommand{\df}{\displaystyle\frac}
\newcommand{\seq}[1]{\left<#1\right>}   
\newcommand{\F}{\mathcal{F}}
\newcommand{\End}{\operatorname{End}}
\newcommand{\T}{\mathcal{T}}
\newcommand{\Ts}{\mathcal{T}^{\mu}}
\newcommand{\MM}[2]{M^{#1,#2}(\mathbb{Z}_{\geq 0})}
\newcommand{\card}{\operatorname{card}}
\newcommand{\std}{\operatorname{std}}
\newcommand{\im}{\operatorname{Im}}
\newcommand{\N}{\mathcal{N}}
\newcommand{\PP}{\mathfrak{P}}
\newcommand{\Id}{\operatorname{Id}}
\newcommand{\Xmm}{X^{\mu,\mu'}}
\newcommand{\Ymm}{Y^{\mu,\mu'}}
\newcommand{\TT}{\textbf{T}}
\newcommand{\D}{\mathfrak{D}}
\newcommand{\Zt}{Z^{\textbf{t}}}
\newcommand{\tlo}{\tilde{\omega}}
\newcommand{\rk}{\operatorname{rank}}
\newcommand{\sll}{\mathfrak{sl}}
\newcommand{\usl}[1]{\mathcal{U}(\sll_{#1})}
\newcommand{\yd}{2d}
\newcommand{\yy}{10}
\newcommand{\yya}{11}
\newcommand{\yyb}{12}
\newcommand{\yyc}{13}
\newcommand{\yyd}{14}
\author{Daniele Rosso}
\address{\newline
Daniele Rosso \newline 
The University of Chicago \newline 
Department of Mathematics \newline
5734 S. University Ave. Chicago, IL 60637}
\email{d\_rosso@math.uchicago.edu}
\keywords{Partial flag varieties, RSK correspondence}
\subjclass{Primary 14M15, Secondary 05A05}
\begin{document}

\title[Classic and mirabolic RSK correspondence for partial flags]{Classic and mirabolic Robinson-Schensted-Knuth correspondence for partial flags}

\date{\today}
\begin{abstract}In this paper we first generalize to the case of partial flags a result proved both by Spaltenstein and by Steinberg that relates the relative position of two complete flags and the irreducible components of the flag variety in which they lie, using the Robinson-Schensted-Knuth correspondence. Then we use this result to generalize the mirabolic Robinson-Schensted-Knuth correspondence defined by Travkin, to the case of two partial flags and a line.\end{abstract} 
\maketitle

\section{Introduction}
\subsection{}
The Robinson-Schensted-Knuth correspondence (RSK for short) is a very classical result. It was first discovered by Robinson (see \cite{R}) as a bijection between permutations of $d$ letters and pairs of standard Young tableaux of the same shape on $d$ boxes, then independently rediscovered by Schensted (see \cite{Sc}). It was eventually generalized by Knuth (see \cite{K}) to the case of two rowed arrays in lexicographic order (or equivalenty matrices with nonnegative integer entries) and pairs of semistandard Young tableaux of the same shape.

This  correspondence comes up when considering flag varieties. 
The Bruhat decomposition tells us that the relative position of two complete flags in a $d$-dimensional space $V$
is given by an element of the symmetric group $S_d$. 
Also, given a nilpotent $x\in\End(V)$, the irreducible components of the subvariety of flags that are preserved by $x$ are parametrized by the standard tableaux on the shape $\lambda$, which is the Jordan type of $x$ (see \cite[II 5.21]{Sp}, \cite{St}).
Then it is a theorem (see \cite[II 9.8]{Sp}, \cite{St}) that, for two general flags, their relative position is given by the permutation that we get applying the RSK correspondence to the standard tableaux associated to the irreducible components in which they lie.

\subsection{}
We would like to generalize the above to pairs of partial flags.

For a nilpotent transformation $x$, we consider the following variety of $n$-step partial flags that are preserved by $x$:
$$\{F:\quad 0=F_0\subset F_1\subset\ldots\subset F_{n-1}\subset F_n=V |x(F_i)\subset F_{i-1} \quad\forall i\}.$$
The irreducible components of this variety can be parametrized by `semistandard' tableaux (better, by transposed of semistandard tableaux, more on this later) by applying to our specific case some results of Haines about the fibers of convolution morphisms in the affine Grassmanian (see \cite{H}).
This parametrization is also essentially the same that Spaltenstein shows in \cite{Sp1}.

Notice that Shimomura has also worked on partial flag varieties and in \cite{Sh} has given a parametrization of the irreducible components of the variety of partial flags that are invariant under a nilpotent transformation, using Young tableaux, but the variety he considers is different from ours. 


Given two flags $F$, $F'$ (partial or complete) we define the \emph{relative position} of $F$ and $F'$ to be the matrix of nonnegative integers $M(F,F')$ with entries given by:
\begin{equation}\label{rpos}M(F,F')_{ij}=\dim\left(\frac{F_i\cap F'_j}{F_i\cap F'_{j-1}+F_{i-1}\cap F'_j}\right).\end{equation}

Then, see \cite[1.1]{blm}, the set of such matrices parametrizes the orbits of the  diagonal action of $\gl_d$ on the set of pairs of flags. 

It seems then natural to ask if the theorem generalizes to the case of partial flags. Given two partial flags, is the matrix of relative position the one that corresponds through the more general RSK correspondence to the two semistandard tableaux indexing the irreducible components in which the flags lie?

As we prove in Theorem \ref{rosso}, the answer is yes, if we modify slightly the usual conventions for the RSK correspondence. We need a variation to account for the fact that the `semistandard' tableaux mentioned earlier are actually transposed of semistandard tableaux (i.e. the strictness of the inequalities is switched from rows to columns and viceversa).

\subsection{}
The second part of the paper is concerned with generalizing Travkin's construction from \cite{T} to the case of partial flags and not just complete flags. We generalize his algorithm and then, using the results of the first part, we show that the generalization agrees with the geometry of the varieties involved.

The diagonal action of $\gl(V)$ on the variety of triples of two flags and a line has orbits that can be parametrized by pairs $(M,\Delta)$ (see \cite{MWZ}). Here $M$ is the relative position of the two flags, as in \eqref{rpos}, and $\Delta$ is some more combinatorial data (which we will see more precisely in Section \ref{sec2}) that tells us where the line lies. In the case where the flags are complete, the matrix $M$ is just a permutation matrix.

If we only consider complete flags, then, the set parametrizing the orbits can be thought of as the set of colored permutations $RB$, that is permutation words where every letter is assigned one of two colors (say red and blue).

In his paper \cite{T}, Travkin has introduced the \emph{mirabolic Robinson-Schensted-Knuth correspondence}. It is a bijection between $RB$ and the set of all $\{(\lambda, \theta, \lambda', T, T')\}$, where $T$, $T'$ are standard Young tableaux of shape $\lambda$ and $\lambda'$ respectively, and $\theta$ is another partition that satisfies $\lambda_i\geq\theta_i\geq\lambda_{i-1}$ and $\lambda'_i\geq\theta_i\geq\lambda'_{i-1}$ for all $i$.
This mirabolic RSK correspondence has a geometric meaning: given a colored permutation indexing a $\gl(V)$-orbit on the space of two complete flags and a line, it describes the type of a generic conormal vector to the orbit.

Many arguments in the second part of the paper are just adaptations of Travkin's arguments to the case of partial flags.

\subsection{}
This paper is part of an ongoing project that studies the convolution algebras of $\gl(V)$-equivariant functions on varieties of triples of two $n$-step partial flags and a line. We have partial results for the cases $n=2,3$ where we get a direct summand isomorphic to $M_n(\usl{n})$. These involve finding a rather complicated central element in the algebra. We believe that the mirabolic RSK correspondence for partial flags will help us find central elements and hopefully generalize these results to any $n$.

\section{Flag Varieties and Tableaux}\label{notat}
Let us fix some notation. 

For any set $X$, we will denote its cardinality by $\card X$. 

We denote by $S_d$ the symmetric group on $d$ elements.

We let $V$ be a $d$-dimensional vector space over the field $k$, and $\F$ be the variety of complete flags in $V$. 

We let $G$ be the general linear group $G=\gl(V)\simeq\gl_d$ and we let $\N$ be the set of nilpotent elements in $\End(V)$.
If $x\in\N$, we let its Jordan type be $\lambda=(\lambda_1,\lambda_2,\ldots,\lambda_m)$. Then $\lambda$ is a partition of $d$, which means that it satisfies $\lambda_1\geq\lambda_2\geq\ldots\lambda_m$, and $|\lambda|=\lambda_1+\lambda_2+\ldots+\lambda_m=d$. 

We consider the subvariety $\F_x\subset\F$ of flags preserved by $x$, that is
$$\F_x:=\{F\in\F|x(F_i)\subset F_{i-1}\}.$$
\begin{defn}\label{deft}
Now let $\T_\lambda$ be the set of standard Young tableaux of shape $\lambda$, we can define a map
$$t:\F_x\to\T_\lambda$$
in the following way: given $F\in\F_x$, consider the Jordan type of the restriction $x|_{F_i}$. This gives us an increasing sequence of Young diagrams each with one box more than the previous one. Filling the new box with the number $i$ at each step, we get a standard tableau.
\end{defn}
Then (see \cite[II 5.21]{Sp},\cite{St}) for a tableau $T\in\T_\lambda$, if we let $\F_{x,T}=t^{-1}(T)\subset\F_x$, we have that the closure $C_{x,T}=\overline{\F_{x,T}}$ is an irreducible component of $\F_x$. All the irreducible components are parametrized in this way by the set of standard tableaux of shape $\lambda$. In \cite{Sp}, Spaltenstein actually uses a slightly different parametrization, to see how the two parametrizations are related, see \cite{vL}.
\begin{defn}In this paper, whenever we will refer to a \emph{general} element in a variety or subvariety, we will mean any element in a suitable open dense subset. \end{defn}


We can now state the result ( \cite[II 9.8]{Sp} and \cite[1.1]{St}) that we wish to generalize in the first part of this paper.

\begin{thm}\label{stein}Let $\F$ be the variety of complete flags on a vector space $V$, and $x\in\End(V)$ a nilpotent transformation of Jordan type $\lambda$. Let $T,T'$ be standard Young tableaux of shape $\lambda$ and $C_{x,T}$ and $C_{x,T'}$ the corresponding irreducible components of $\F_x$. Then for general flags $F\in C_{x,T}$ and $F'\in C_{x,T'}$, the permutation $w(F,F')$ that gives the relative position of the two flags is the same as the permutation $w(T,T')$ given by the RSK correspondence.
\end{thm}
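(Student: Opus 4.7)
My plan is to proceed by induction on $d = \dim V$, paralleling on the geometric side the step-by-step structure of Schensted insertion. The base case $d=1$ is trivial, so assume the theorem for all smaller dimensions.

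For the inductive step, fix general $F \in C_{x,T}$ and $F' \in C_{x,T'}$ and pass to the quotient $\bar V = V/F_1$ with induced nilpotent $\bar x$. Genericity of $F$ forces $F_1$ to lie inside the kernel of a suitable Jordan block of $x$, so the Jordan type $\bar\lambda$ of $\bar x$ is obtained from $\lambda$ by deleting the removable corner that houses the entry $1$ of $T$. The flag $\bar F = (F_2/F_1 \subset \cdots \subset F_d/F_1)$ then lies in $C_{\bar x,\bar T}$, where $\bar T$ is $T$ with that corner removed and all remaining entries decremented by one. Let $j = w(F,F')(1)$ be the unique index with $F_1 \subset F'_j$ and $F_1 \not\subset F'_{j-1}$; projecting $F'$ to $\bar V$ and collapsing the repeated subspace yields a complete flag $\bar F'$ in $C_{\bar x,\bar T'}$, where $\bar T'$ is $T'$ with the cell labelled $j$ removed and entries greater than $j$ decremented by one. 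The inductive hypothesis then gives $w(\bar F,\bar F') = w(\bar T,\bar T')$.

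Matching this with the original $w(F,F')$ reduces to a standard combinatorial lemma about RSK: if a permutation $w$ has RSK image $(T,T')$ and $w(1) = j$, then deleting the first letter of $w$ and relabelling the remaining values by decrementing those greater than $j$ produces a permutation on $d-1$ letters whose RSK image is precisely $(\bar T,\bar T')$. This captures the fact that removing the first step of the RSK process corresponds to erasing the cell ``$1$'' from $T$ while simultaneously erasing the cell ``$j$'' from $T'$. Combining this combinatorial identity with the inductive conclusion yields $w(F,F')=w(T,T')$.

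The main obstacle is the genericity transfer: I must verify that when $F$ is general in $C_{x,T}$, the quotient $\bar F$ is general in $C_{\bar x,\bar T}$, and similarly for $F'$ once $j$ is determined. This amounts to understanding the forgetful map $\F_x \to \F_{\bar x}$ (sending a flag to its image modulo $F_1$), checking that $F_1$ is generically a \emph{specific} $x$-stable line (the one determined by the corner labelled $1$ of $T$), and verifying dominance onto the relevant components via a dimension count using the tableau parametrization of irreducible components. Once this is in place, the remaining bookkeeping — identifying $j$ with $w(F,F')(1)$ and invoking the RSK deletion lemma — is routine.
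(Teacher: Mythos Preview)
The paper does not actually prove Theorem~\ref{stein}; it is quoted from \cite[II 9.8]{Sp} and \cite[1.1]{St} as the known result to be generalized. So there is no ``paper's own proof'' to compare against, and your proposal must be judged on its own.

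Your inductive scheme has a real gap at its first step. You write that the Jordan type $\bar\lambda$ of $\bar x$ on $V/F_1$ is obtained from $\lambda$ by deleting ``the removable corner that houses the entry $1$ of $T$''. But in any standard Young tableau the entry $1$ sits in the cell $(1,1)$, which is \emph{never} a removable corner (unless $d=1$). More to the point, the tableau $T$ is defined via the Jordan types of the \emph{restrictions} $x|_{F_i}$, so it directly records which corner is removed when passing from $V$ to $F_{d-1}$ (namely the corner labelled $d$), not what happens under the quotient $V\to V/F_1$. For generic $F\in C_{x,T}$ the Jordan type of $x$ on $V/F_1$ is governed instead by the Sch\"utzenberger evacuation of $T$; it is not obtained by deleting any single cell of $T$. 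The same problem infects your ``combinatorial lemma'': deleting the first letter of $w$ does \emph{not} correspond under RSK to erasing a corner cell from the insertion tableau --- it corresponds to a jeu de taquin slide (promotion) out of the cell $(1,1)$, which is a genuinely nonlocal operation.

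The fix is to run the induction from the top of the flag rather than the bottom: restrict to $F_{d-1}$ (and on the $F'$ side, delete the last letter of $w$ rather than the first). Then the Jordan type of $x|_{F_{d-1}}$ is $\lambda$ minus the corner labelled $d$ in $T$, the induced flag lies in $C_{x|_{F_{d-1}},T^{(d-1)}}$ by the very definition of $t$, and on the combinatorial side removing the last letter of $w$ is exactly one reverse row-bumping step, which removes the box labelled $d$ from the recording tableau. This is essentially Steinberg's argument in \cite{St}. Your quotient-by-$F_1$ route can be made to work, but only after bringing in evacuation/jeu de taquin, which is a substantially heavier tool than the problem requires.
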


Our goal is to extend this result to varieties of partial flags.

\subsection{Partial Flags and Semistandard Tableaux}\label{pfsst}

Let us fix an integer $n\geq1$ and let $\mu$ be a \emph{composition} of $d$, that is $\mu=(\mu_1,\ldots,\mu_n)$ a sequence of positive integers, such that $|\mu|=\mu_1+\mu_2+\ldots+\mu_n=d$ ($\mu$ is not necessarily a partition because we do not require it to be decreasing). We have the variety of $n$-step flags of type $\mu$ in $V$
$$\F^\mu:=\{F=(0=F_0\subset F_1\subset\ldots\subset F_{n-1}\subset F_n=V)|\dim(F_i/F_{i-1})=\mu_i\}.$$
Then for $x$ as before, we consider the subvariety of partial flags that are preserved by $x$:
$$\F^\mu_x:=\{F\in\F^\mu|x(F_i)\subset F_{i-1}\}.$$
If $F\in\F^\mu_x$, we can associate to $F$ a tableau in an analogous way to definition \ref{deft}, except this time at each step we are adding several boxes, none of which will be in the same row. 
The result will be a tableau which is strictly increasing along rows and weakly increasing down columns. For the purpose of this discussion, we will call this kind of tableaux \emph{semistandard}, although by the usual definition this is the transposed of a semistandard tableau.

\begin{defn}Given any tableau $T$ with entries in $\{1,\ldots,n\}$, we say that its \emph{content} is the sequence $\mu=\mu(T)=(\mu_1,\ldots,\mu_n)$ where $\mu_i$ is the number of times the entry $i$ appears in $T$.\end{defn}
 
\begin{defn}\label{eqdeft}
So, if we let $\Ts_\lambda$ be the set of semistandard tableaux of shape $\lambda$ and content $\mu$, we just defined a map
$$t:\F_x^\mu\longrightarrow \Ts_\lambda.$$
\end{defn}

\begin{lem}The irreducible components of $\F_x^\mu$ are the closures $C_{x,T}=\overline{\F_{x,T}}$ where $T\in\Ts_\lambda$ and $\F_{x,T}=t^{-1}(T)$.
\end{lem}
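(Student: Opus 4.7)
The plan is to reduce the lemma to the known complete-flag case via the forgetful projection $\pi\colon \F_x\to\F^\mu_x$ that sends each $x$-invariant complete flag $\tilde F$ to the partial flag obtained by keeping only the subspaces of dimensions $\mu_1+\cdots+\mu_i$. I first check that $\pi$ is a surjective proper morphism with irreducible fibers of constant dimension. Surjectivity holds because any $F\in\F^\mu_x$ refines to a complete flag in $\F_x$ by choosing an arbitrary complete flag inside each successive quotient $F_i/F_{i-1}$ (the invariance $x(\tilde F_j)\subset \tilde F_{j-1}$ is automatic from $x(F_i)\subset F_{i-1}$), and each fiber of $\pi$ is canonically the product of the complete flag varieties of the quotients $F_i/F_{i-1}$, hence irreducible of dimension $\sum_i\binom{\mu_i}{2}$.

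The crucial combinatorial compatibility is that if $\tilde F\in\F_x$ has $t(\tilde F)=S\in\T_\lambda$ and $F=\pi(\tilde F)$, then $t(F)=\sigma(S)$, where $\sigma$ is the coarsening that replaces the entries $\mu_1+\cdots+\mu_{i-1}+1,\ldots,\mu_1+\cdots+\mu_i$ by $i$. The nontrivial point, which is the main obstacle of the proof, is to verify that $\sigma(S)$ actually lies in $\Ts_\lambda$ (is strictly increasing along rows); equivalently, that each skew shape $\lambda^{(i)}/\lambda^{(i-1)}$ between successive Jordan types contains at most one box per row. This follows from a rank-nullity argument applied to the induced zero map $\bar x\colon F_i/F_{i-1}\to F_i/F_{i-1}$, which bounds the differences $\dim\ker(x^k|_{F_i})-\dim\ker(x^k|_{F_{i-1}})$ by the common value $\dim(F_i/F_{i-1})=\mu_i$ in the appropriate way. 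Conversely, every $T\in\Ts_\lambda$ arises as $\sigma(S)$ for some $S$, and for a generic refinement of any $F\in\F_{x,T}$ the resulting complete flag lies in a single stratum $\F_{x,S_0(T)}$; by properness of $\pi$ this yields $\pi(C_{x,S_0(T)})=\overline{\F_{x,T}}$.

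Combining these observations with the decomposition $\F_x=\bigcup_{S\in\T_\lambda}C_{x,S}$ gives $\F^\mu_x=\bigcup_{T\in\Ts_\lambda}\overline{\F_{x,T}}$, with each piece closed and irreducible. The strata $\F_{x,T}$ are mutually disjoint locally closed subsets (as the fibers of the constructible map $t$), and a dimension count combining the equidimensionality of $\F_x$ with the constant fiber dimension $\sum_i\binom{\mu_i}{2}$ of $\pi$ shows that all closures $\overline{\F_{x,T}}$ share the common dimension $\dim\F_x-\sum_i\binom{\mu_i}{2}$. Hence none strictly contains another, and they are precisely the irreducible components of $\F^\mu_x$.
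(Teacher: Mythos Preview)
The paper itself does not prove this lemma; it simply cites \cite{Sp1} and \cite{H}. Your reduction to the complete-flag case via the forgetful map $\pi:\F_x\to\F^\mu_x$ is a sound and natural strategy, closer in spirit to Spaltenstein's brief treatment than to Haines's affine-Grassmannian argument.

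There is, however, one step you pass over too quickly: the claim that for every $F\in\F_{x,T}$ the generic refinement lies in a \emph{fixed} stratum $\F_{x,S_0(T)}$, with $S_0(T)$ depending only on $T$ and not on the particular $F$. This uniformity is what makes $\overline{\F_{x,T}}=\pi(C_{x,S_0(T)})$ a single irreducible set rather than merely a union of several $\pi(C_{x,S})$ with $\sigma(S)=T$; without it the irreducibility of $\overline{\F_{x,T}}$ is not yet established, and your final equidimensionality argument only shows that no component strictly contains another, not that there is exactly one per $T$. The missing justification is short: for $F_{i-1}\subset W\subset F_i$ set $\bar W=W/F_{i-1}$ and $\bar K_k=(\ker x^k\cap F_i+F_{i-1})/F_{i-1}\subset F_i/F_{i-1}$; then one checks
\[
\dim(W\cap\ker x^k)=\dim(F_{i-1}\cap\ker x^k)+\dim(\bar W\cap\bar K_k).
\]
The first summand and the dimensions $\dim\bar K_k$ are determined by $\lambda^{(i-1)},\lambda^{(i)}$, hence by $T$ alone. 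For a generic complete flag $\bar W_\bullet$ in $F_i/F_{i-1}$ the intersection numbers $\dim(\bar W_j\cap\bar K_k)$ take their transversal values, which depend only on $j$, $\mu_i$, and $\dim\bar K_k$. Hence the full sequence of intermediate Jordan types along a generic refinement---and therefore $S_0(T)$---is determined by $T$. With this paragraph added, your argument is complete.
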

For a proof, see \cite{Sp1} or \cite{H}. Spaltenstein discusses this very briefly, and uses a slightly different convention, as was also mentioned earlier. In his result the indexing set is a subset of the standard tableaux. It can be seen that this subset consists of what we will define later in this paper to be the \emph{standardization} of the semistandard tableaux. 

On the other hand Haines, during the proof of Theorem 3.1 proves a more general result about irreducible components of fibers of convolution morphisms from convolution product of $G(\mathcal{O})$-orbits in the affine Grassmannian. In his result, the combinatorial data are sequences of dominant weights such that the difference of two consecutive weights is in the orbit of the Weyl group acting on a dominant minuscule weight. In our case these correspond to the semistandard tableaux.

\subsection{Relative Position, Words and Arrays}\label{relposwa} 
Given two flags $F$, $F'$, we have defined in \eqref{rpos} their relative position $M(F,F')$. Notice that if $F\in\F^\mu$ and $F'\in\F^\nu$, the row sums of this matrix will be $\mu=(\mu_1,\ldots,\mu_n)$ and the column sums will be $\nu=(\nu_1,\ldots,\nu_m)$. Then, see \cite[1.1]{blm}, the set $\MM{\mu}{\nu}$ of all such matrices parametrizes the orbits of the  diagonal action of $\gl_d$ on $\F^\mu\times\F^\nu$.

In particular, if $F$ and $F'$ are both complete flags in $V$, $M(F,F')$ will be a permutation matrix. This data is equivalent to the word $w(F,F')=w(1)\ldots w(d)$  where $w(i)=j$ if $1$ appears in the $(j,i)$-entry of the matrix. 


\begin{defn}\label{arrtom}If $F,F'$ are both partial flags, then $M(F,F')$ is just a matrix of nonnegative integers. We can record the same data in a two-rowed array 
$$\omega=\left(\begin{array}{cccc}u(1) & u(2) & \ldots & u(d) \\ w(1) & w(2) & \ldots & w(d) \end{array}\right)$$
which is defined as follows. 

A pair $\left(\begin{smallmatrix} i \\ j\end{smallmatrix}\right)$ appears in $\omega$ a number of times equal to the $(j,i)$-entry of $M(F,F')$. 

The array $\omega$ is then ordered so that it satisfies the following relation: 
\begin{equation}\label{order}u(1)\leq u(2)\leq\ldots\leq u(d)\qquad\text{ and }\quad w(k)\geq w(k+1)\quad\text{ if }u(k)=u(k+1).\end{equation}
\end{defn}
\begin{exa}\label{notlex}If $M(F,F')$ is the matrix on the left, the corresponding array $\omega$ is given on the right:
$$M(F,F')=\left(\begin{array}{ccc} 1 & 0 & 2 \\ 3 & 1 & 1 \end{array}\right)\quad \omega=\left(\begin{array}{cccccccc} 1 & 1 & 1 & 1 & 2 & 3 & 3 & 3 \\ 2 & 2 & 2 & 1 & 2 & 2& 1 & 1 \end{array}\right).$$ \end{exa}
The set $\MM{\mu}{\nu}$ 
is thus identified, with the convention just described, 
with the set of two rowed arrays such that the first row has content $\nu$, the second row has content $\mu$, and they satisfy the order \eqref{order}. 

Depending on what is more convenient at each time, we will use either description of this set.
\begin{rem}\label{dcosets}Another way of looking at the set $\MM{\mu}{\nu}$ is as the set of double cosets $S_{\mu}\setminus S_d/S_{\nu}$. Here $S_d$ is the symmetric group on $d$ letters and $S_{\mu}$ and $S_{\nu}$ are the Young subgroups corresponding to the compositions $\mu$ and $\nu$.
\end{rem}
\begin{rem}\label{lex}Our convention is different from what is used in \cite{F} and \cite{S2}, where the arrays are taken to be in \emph{lexicographic order},
that is with
$$u(1)\leq u(2)\leq\ldots\leq u(d)\qquad\text{ and }\quad w(k)\leq w(k+1)\quad\text{ if }u(k)=u(k+1).$$
With the lexicographic convention, the matrix of example \ref{notlex} would correspond to the array
$$\omega'=\left(\begin{array}{cccccccc} 1 & 1 & 1 & 1 & 2 & 3 & 3 & 3\\ 1 & 2 & 2 & 2 & 2 & 1 & 1 & 2\end{array}\right). $$ 
\end{rem}
\section{Robinson-Schensted-Knuth Correspondence and Standardization} In this section, we will review quickly some definitions and properties of the RSK correspondence, following mainly the conventions of \cite[I]{F} and \cite[7.11]{S2}. Then we will see how to adapt the results to the conventions we are using.
\subsection{Review of RSK}
Just for this review, we will call a tableau \emph{semistandard} if it is weakly increasing along rows and strictly increasing down columns. With this convention, the tableaux we defined in section \ref{pfsst} are transposed of semistandard tableaux.
We will also identify matrices with arrays using the lexicographic order, as in Remark \ref{lex}. 

With increasing generality, the RSK correspondence gives a bijection between permutations and pairs of standard tableaux of same shape, or between two-rowed arrays in lexicographic order and pairs of semistandard tableaux of same shape.

Given a permutation word $w$ or a two rowed array $\omega$, where
$$w=w(1)\ldots w(d)\quad\quad\omega=\left(\begin{smallmatrix}u(1) & u(2) & \ldots & u(d) \\ w(1) & w(2) & \ldots & w(d) \end{smallmatrix}\right),$$ the algorithm is given by inserting the entries of the word (or of the second row of the array) by row bumping in the first tableau. 
At the same time we record in the second tableau which box has been added at each step (in the more general case of the array, the added box at the $k$-th step will be recorded with $u(k)$ as opposed to $k$). The convention for row bumping is that a new entry $z$ bumps the left-most entry in the row which is \emph{strictly larger} than $z$.

If $T$, $T'$ are semistandard tableaux and $\omega$ is an array in lexicographic order, we will denote the correspondence by
$$ M(T,T')=\omega; \quad\text{ or }\quad(T,T')\stackrel{RSK}{\longleftrightarrow}\omega.$$

As can be seen in \cite[7.11]{S2}, given a semistandard tableau $T$ we can consider its \emph{standardization} $\tilde{T}$. It is a standard tableau of the same shape as $T$. We construct it in this way: the $\mu_1$ boxes that contain $1$ in $T$ will be replaced by the numbers $1,2,\ldots,\mu_1$ increasingly from left to right. Then the boxes that originally contained $2$'s will be replaced by $\mu_1+1,\ldots,\mu_1+\mu_2$ also increasingly from left to right, and so on.
\begin{exa}\label{ex1}
$$T=\young(112,2,3)\qquad\tilde{T}=\young(124,3,5).$$
\end{exa}
In a similar way, given an array in lexicographic order $\omega=\left(\begin{smallmatrix}u(1) & u(2) & \ldots & u(d) \\ w(1) & w(2) & \ldots & w(d) \end{smallmatrix}\right)$ we can define the standardization $\tilde{\omega}$. It is given by replacing $u(i)$ with $i$ in the first row, while in the second row we replace the $1$'s with $1,2,\ldots,\mu_1$ increasing from left to right, then the $2$'s and so on. The standardization of an array will then be a permutation.
\begin{exa}\label{ex2}
$$\omega=\left(\begin{array}{ccccc} 1 & 2 & 2 & 3 & 3 \\ 3 & 1 & 2 & 1 & 2 \end{array}\right)\qquad \tilde{\omega}=\left(\begin{array}{ccccc} 1 & 2 & 3 & 4 & 5 \\ 5 & 1 & 3 & 2& 4\end{array}\right).$$
\end{exa}
Standardization allows us to always reduce the RSK correspondence to the special case of permutations and standard tableaux, because standardization and RSK commute.
\begin{lem}\label{lem1}The following diagram commutes:
$$\begin{CD}
\Ts_\lambda\times\T_\lambda^{\nu} @>\text{RSK}>> M^{\mu,\nu}(\mathbb{Z}_{\geq 0}) \\
@VV\std\times\std V                             @VV\std V \\
\T_\lambda\times\T_\lambda @>\text{RSK}>>S_d
\end{CD}$$
\end{lem}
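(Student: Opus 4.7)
The plan is to proceed by induction on $d$, analyzing how a single row-insertion step interacts with standardization. Let $\omega_{d-1}$ denote the subarray of $\omega$ consisting of the first $d-1$ columns, and let $\binom{u(d)}{w(d)}$ be the last column. Suppose by induction that if $(P_{d-1}, Q_{d-1}) \stackrel{\text{RSK}}{\longleftrightarrow} \omega_{d-1}$, then $(\std(P_{d-1}), \std(Q_{d-1})) \stackrel{\text{RSK}}{\longleftrightarrow} \std(\omega_{d-1})$. It then suffices to show that row-inserting $w(d)$ (and recording with $u(d)$) is compatible with standardization, which reduces the problem to a statement about a single insertion step.

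The heart of the argument is the following key claim: if $P$ is a semistandard tableau (in the review convention) and $c$ is an entry to be inserted, set $P' = P \leftarrow c$ and let $c^*$ denote the integer label that standardization assigns to the newly occupied cell of $P'$. Then $\std(P') = \std(P) \leftarrow c^*$. To prove this I would trace the bumping path explicitly. Row-inserting $c$ into $P$ produces entries $x_1, x_2, \ldots$ bumped from successive rows, where $x_i$ is the leftmost entry in row $i$ strictly greater than $x_{i-1}$ (with $x_0 = c$), giving $c < x_1 < x_2 < \cdots$. The standardization convention (for each value, assign consecutive integers left-to-right by column) is designed precisely so that $c^* < \std(x_1) < \std(x_2) < \cdots$, and so that each $\std(x_i)$ remains the leftmost entry in row $i$ of $\std(P)$ strictly greater than the previous standardized bumped value. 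Consequently, row-inserting $c^*$ into $\std(P)$ follows the same bumping path cell-by-cell and lands in the same new cell, yielding $\std(P')$.

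The recording tableau $Q$ is easier to handle: because $\omega$ is in lexicographic order, $u(d)$ is the maximum entry in the first row, so $\std(u(d))$ is the maximum of the standardized first row. In both the un-standardized and standardized insertions, the newly added box is thus assigned the largest available recording label, so the shapes and labels agree automatically once the $P$-tableau compatibility is established.

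The main obstacle is verifying the key claim in detail, namely checking that the left-to-right-by-column standardization rule interleaves correctly with row bumping. The delicate point is that equal values in $P$ (and equal values in the second row of $\omega$) must be shown to receive standardized labels in exactly the order required to preserve every strict inequality appearing in the bumping sequence. The key combinatorial input is that within each row of a semistandard tableau, equal entries form a contiguous run, so a newly row-inserted copy of $c$ always lands strictly to the right of any $c$'s already in that row; combined with the fact that the lex order on $\omega$ forces earlier insertions of equal values to occur first, this matches the order in which standardization assigns their labels, making the bumping paths before and after standardization coincide.
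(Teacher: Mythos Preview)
The paper does not actually prove this lemma; it cites Stanley \cite[7.11.6]{S2} and gives an illustrative example. Your approach---tracing bumping paths and showing they coincide under standardization---is the standard argument one finds there, so in spirit you are on the right track.

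However, your key claim as stated is false. Take $P=\young(12)$ and insert $c=1$. Then $P'=\young(11,2)$, the newly occupied cell is $(2,1)$, and $\std(P')=\young(12,3)$ assigns that cell the label $c^*=3$. But $\std(P)=\young(12)$, and $\std(P)\leftarrow 3=\young(123)\neq\std(P')$. The underlying issue is that $\std(P)$ has entry set $\{1,\dots,|P|\}$ while $\std(P')$ has entry set $\{1,\dots,|P|+1\}$: inserting a new value $c$ shifts some of the standardization labels of the old cells, so $\std(P')$ cannot be obtained from $\std(P)$ by a single insertion. Your induction hypothesis has the same defect: $\std(\omega_{d-1})$ is the standardization of $\omega_{d-1}$ with respect to its own (smaller) content, and this does \emph{not} coincide with the first $d-1$ columns of $\std(\omega)$ (e.g.\ for $\omega=\left(\begin{smallmatrix}1&2\\2&1\end{smallmatrix}\right)$ one has $\std(\omega_1)=\left(\begin{smallmatrix}1\\1\end{smallmatrix}\right)$ but the first column of $\std(\omega)$ is $\left(\begin{smallmatrix}1\\2\end{smallmatrix}\right)$).

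The fix is to keep the \emph{global} standardization fixed throughout: compare the insertion process for $\omega$ step by step with that for $\std(\omega)$, and show by induction on $i$ that the intermediate tableaux $P_i$ and $\tilde P_i$ have the same shape, with their entries related cell-by-cell by the single global relabeling $w(j)\mapsto\tilde w(j)$. In that framework, the value you insert at step $d$ on the standardized side is $\tilde w(d)$, which is a fresh label not appearing in $\tilde P_{d-1}$, and your bumping-path argument then goes through. Your observation about contiguous runs of equal entries and the lex order on $\omega$ is exactly the right ingredient for showing the bumping paths match; it just needs to be deployed inside this corrected inductive setup rather than against $\std(P_{d-1})$ and $\std(\omega_{d-1})$.
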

In the diagram, $\T_\lambda$, $\Ts_\lambda$, $\T_\lambda^{\nu}$ are respectively the set of standard tableaux and the sets of semistandard tableaux with content $\mu$ and $\nu$, all of shape $\lambda$. Also, $M^{\mu,\nu}(\mathbb{Z}_{\geq 0})$ is the set of two rowed arrays in lexicographic order with row contents $\nu$ and $\mu$ and $\std$ is the standardization map.

The lemma is proved in \cite[7.11.6]{S2}, but let us illustrate this with an example. 
\begin{exa}Let $T$, $\omega$ be as in examples \ref{ex1} and \ref{ex2} and let 
$$T'=\young(123,2,3)\quad\text{ then we have }\quad \std(T')=\tilde{T'}=\young(135,2,4)$$
then $(T,T')\stackrel{RSK}{\longleftrightarrow}\omega$
and indeed $(\tilde{T},\tilde{T'})\stackrel{RSK}{\longleftrightarrow}\tilde{\omega}$.
\end{exa}
\subsection{Variation on RSK}\label{varrsk} In this paper we will need a slight variation on the RSK correspondence. This will agree with RSK on permutations, but will give different results in the case of general two rowed arrays. It will associate to an array satisfying \eqref{order}, a pair of tableaux that are strictly increasing along rows and weakly increasing down columns. This is what we called \emph{semistandard} in section \ref{pfsst} and we will keep using this terminology from now on. In the rest of this paper, we will also set the convention of identifying matrices and arrays using Definition \ref{arrtom}.

The variation of the correspondence is defined modifying the row bumping algorithm to the following: a new entry $z$ will bump the left-most entry in the row which is \emph{greater or equal} to $z$. The recording tableau will be constructed in the usual way. 

This difference is clearly irrelevant in the case of standard tableaux, but our new choice of row bumping will produce tableaux that are strictly increasing along rows and weakly increasing down columns. This is similar to the dual RSK defined in \cite[7.14]{S2}, which however is only defined for matrices of $0$'s and $1$'s.

Since we will only use this variation on the correspondence, from now on we will call this one RSK and we will use the same notation as before, there should be no confusion.

\begin{lem}This procedure gives a bijection between matrices of non-negative integers and pairs of semistandard (strictly increasing along rows and weakly increasing down columns) tableaux of same shape.\end{lem}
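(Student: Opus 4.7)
The plan is to mirror the standard proof of the classical Knuth bijection, with modifications appropriate to the new bumping rule and the new notion of semistandard tableau. I first check that inserting a single entry $z$ by the modified rule (bump the leftmost entry $\geq z$) into a tableau strictly increasing along rows and weakly increasing down columns produces another tableau of the same type; the row-strict inequality is immediate from the rule, and the column-weak inequality requires only a local check at each bumping position. A short separate argument, exploiting the ordering \eqref{order}, shows that when a consecutive block of entries with a common $u$-value is inserted (with $w$'s weakly decreasing), the new boxes are added in pairwise distinct rows, so the recording tableau built alongside the insertion tableau is itself of the new semistandard type.

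Next, I construct the inverse. For a single insertion I define reverse row bumping: from a corner cell containing $y$, look in the row above and select the rightmost entry strictly less than $y$, swap, and continue upward until an entry is ejected from the top row. To invert the full correspondence, given a pair $(P,Q)$ of the same shape, I process the entries of $Q$ from largest to smallest. For a fixed value $m$, the cells of $Q$ labeled $m$ lie in distinct rows by row-strictness of $Q$, and thus form a vertical strip; I remove them from bottom to top, and for each cell I run reverse bumping on $P$, recording the ejected entry as $w$ paired with $u=m$. Writing these pairs in the reverse of the extraction order recovers the two-rowed array. The ordering \eqref{order} is automatic: within a fixed $u$-value, the $w$-entries extracted from bottom to top come out weakly increasing, so when written in reverse they are weakly decreasing as required.

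The main obstacle is the bumping lemma adapted to the new rule: one must verify that forward and reverse bumping are genuine inverses, and in particular that the reverse rule (rightmost entry strictly less than $y$) selects precisely the cell modified by the forward rule (leftmost entry $\geq z$). This reduces to an interval argument in each row: entries strictly to the left of the bumping position are $<z\leq y$, while entries strictly to the right of the new position are $>y$, so the cell just modified is the unique candidate for the reverse rule. A parallel check in the adjacent row shows that the next reverse-bumping step is well-defined. Once this lemma and its column analogue are in place, bijectivity and the inverse algorithm assemble exactly as in the classical argument, carried out for instance in \cite[\S 7.11]{S2}.
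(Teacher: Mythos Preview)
Your approach matches the paper's: verify that the insertion tableau $P$ is semistandard, use the ordering \eqref{order} to show that equal $u$-values land in strictly increasing rows (so $Q$ is semistandard), and invert by reverse bumping, processing the largest entries of $Q$ first and, among equal entries, the one in the lowest row. The paper is in fact terser than you are and simply asserts that reverse bumping works ``analogously to the usual proofs''.

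There is, however, a slip in your stated reverse rule. You write that from a bumped value $y$ one should select in the row above the \emph{rightmost entry strictly less than $y$}. Under the modified forward rule (bump the leftmost entry $\geq z$) it is entirely possible that $z=y$; for instance, inserting $3$ into the row $1\;3\;5$ bumps the $3$ and leaves the row unchanged. In that case the cell just modified contains $y$ itself, and your rule would erroneously select the entry to its left. Your own interval analysis already gives the correct statement: entries strictly left of the new cell are $<z\leq y$ and entries strictly right are $>y$, so the modified cell holds the rightmost entry $\leq y$. The reverse rule must therefore be ``rightmost entry $\leq y$'' (weak inequality), dual to the weak inequality in the forward rule. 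With that correction your argument goes through.
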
 
\begin{proof}This is completely analogous to the usual proofs of the RSK correspondence (see \cite{F},\cite{S2}).

If the array corresponding to the matrix is $\omega=\left(\begin{array}{ccc}u(1) & \ldots & u(d) \\ w(1) & \ldots & w(d) \end{array}\right)$ and by the correspondence it gives us the pair of tableaux $(P,Q)$, then it is clear that the insertion tableau $P$ will be semistandard. To check that the recording tableau $Q$ is also semistandard, it is enough to show that if $u(i)=u(i+1)$, then $u(i+1)$ will end up in a row of $Q$ that is strictly below the row of $u(i)$. 

Since $\omega$ satisfies \eqref{order}, if $u(i)=u(i+1)$, then $w(i)\geq w(i+1)$. This means that if $w(i)$ bumps an element $y_i$ from the first row, then the element $y_{i+1}$ bumped by $w(i+1)$ from the first row must be in the same box where $y_i$ was or in a box to the left of it. In turn, this implies that $y_i\geq y_{i+1}$ and we can iterate this argument for the following rows. Now, the bumping route $R_{i}$ of $w(i)$ must stop before the bumping route $R_{i+1}$ of $w(i+1)$, which will then continue at least one row below that of $R_i$, which shows what we want.  

The fact that the correspondence is a bijection just follows from the fact that we can do the reverse row bumping algorithm by taking at each step the box that in the recording tableau contains the biggest number. In case of equal elements, we will take the one that is in the lowest row.
\end{proof}
\begin{rem}
Basically in this version of RSK we are considering equal entries in a tableau to be 'bigger' if they are in a lower row and, while inserting, sequences of equal numbers are considered decreasing sequences.\end{rem}

This leads us to a new definition of \emph{standardization} that will give us an analogous result to lemma \ref{lem1}. 
Given a semistandard tableau $T$, we define its standardization $\tilde{T}$ by replacing the $1$'s with $1,2,\ldots,\mu_1$ \emph{starting from the top row and going down}, and then the same for $2$'s and so on. For an array $\omega$ ordered as in \eqref{order}, we define $\tilde{\omega}$ by replacing the first row with $1,2,\ldots,d$ and on the second row we replace the $1$'s by $1,2,\ldots,\mu_1$ \emph{decreasingly} from left to right and same for the rest, always decreasing from left to right.
\begin{exa}\label{exsst}
$$T=\young(12,12,3)\qquad \tilde{T}=\young(13,24,5)$$
$$\omega=\left(\begin{array}{ccccc} 1 & 2 & 2 & 3 & 3 \\ 1 & 3 & 1 & 2 & 2 \end{array}\right)\qquad
\tilde{\omega}=\left(\begin{array}{ccccc} 1 & 2 & 3 & 4 & 5 \\ 2 & 5 & 1 & 4 & 3 \end{array}\right) $$
\end{exa}
\begin{rem}From the point of view of Remark \ref{dcosets}, the standardization of an array corresponds to choosing the longest representative for the double coset.\end{rem}
With our new conventions for semistandard tableaux, order of arrays, RSK, standardization and the same notation of lemma \ref{lem1} we have that
\begin{lem}\label{lem2}Standardization and RSK commute, as in the following diagram:
$$\begin{CD}
\Ts_\lambda\times\T^{\nu}_\lambda @>\text{RSK}>> M^{\mu,\nu}(\mathbb{Z}_{\geq 0}) \\
@VV\std\times\std V                             @VV\std V \\
\T_\lambda\times\T_\lambda @>\text{RSK}>>S_d
\end{CD}$$
\end{lem}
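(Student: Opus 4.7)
The plan is to prove the lemma by induction on $d$, following the pattern of the proof of Lemma \ref{lem1} given in \cite[7.11.6]{S2} but with bookkeeping adapted to the modified RSK rule and to the new conventions for standardization. An alternative would be to reduce to Lemma \ref{lem1} via tableau transposition $T \mapsto T^t$, which interchanges the two notions of ``semistandard'' and interchanges the top-to-bottom standardization order with the left-to-right one; I will outline the direct inductive approach, which is more transparent.

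For the inductive step, write $\omega = \omega' \cdot \left(\begin{smallmatrix} u(d) \\ w(d) \end{smallmatrix}\right)$ and let $(P_{d-1}, Q_{d-1})$ be the image of $\omega'$ under the modified RSK. The inductive hypothesis gives that $(\std P_{d-1}, \std Q_{d-1})$ corresponds to $\std\omega'$ under standard RSK. What remains is to verify that inserting $\std(w(d))$ into $\std P_{d-1}$ by the standard (bump leftmost entry $>$) rule produces $\std P$, and that the newly added recording box receives label $\std(u(d)) = d$.

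The crucial compatibility to establish is between the modified bumping rule and the top-to-bottom tableau standardization. Modified row-bumping of $w(d)$ bumps, in each row of $P_{d-1}$, the leftmost entry $\geq w(d)$. Under our top-to-bottom standardization of $P_{d-1}$, the entries equal to $w(d)$ receive consecutive labels in top-to-bottom order; meanwhile, since $w(d)$ is the last occurrence of its value in the $w$-row of $\omega$, the decreasing-from-left-to-right array standardization assigns $\std(w(d))$ the smallest label among positions of that value. Combining these observations with the inductive hypothesis, which identifies array-standardized labels with tableau-standardized labels through the correspondence, one sees that $\std(w(d))$ is strictly less than every label in $\std P_{d-1}$ coming from an entry equal to $w(d)$ in $P_{d-1}$. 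Hence standard row-bumping of $\std(w(d))$ visits exactly the same sequence of boxes as the modified bumping of $w(d)$, and terminates at the same box.

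The main obstacle is handling blocks of equal $u$- and $w$-values. Within a block of equal $u$-values the $w$-sequence is weakly decreasing by \eqref{order}, and the argument for the preceding lemma shows that the bumping routes shift weakly to the left and terminate strictly below one another, producing a vertical strip of new boxes in $Q$. The verification that top-to-bottom standardization of this strip matches the consecutive increasing labels that the array standardization assigns to the $u$-positions across the block is a direct combinatorial bookkeeping; together with the bumping-route compatibility above, it closes the induction.
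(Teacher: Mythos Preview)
Your approach is the same as the paper's, which simply defers to the proof of Lemma \ref{lem1} in \cite[7.11.6]{S2} and notes that the new array standardization is chosen precisely so that runs of equal letters become strictly decreasing sequences, forcing the modified and classical bumping paths to coincide step by step. Your write-up spells this out by induction on $d$; the one point to tighten is that the inductive hypothesis yields $\std P_{d-1}$ relative to the content of $\omega'$ rather than of $\omega$, a harmless discrepancy since the two standardizations differ by an order-preserving relabeling, with which classical RSK commutes.
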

The proof, mutatis mutandis, is the same as the proof of lemma \ref{lem1} in \cite[7.11.6]{S2}. It is just the observation that the standardization we choose for the arrays is exactly the one that makes the insertion procedure work the way we want, turning sequences of equal numbers into decreasing sequences.
\begin{exa}Let $T$, $\omega$ as in example \ref{exsst} and let
$$T'=\young(12,23,3)\quad\text{ then we have }\quad \tilde{T'}=\young(12,34,5)$$
then $(T,T')\stackrel{RSK}{\longleftrightarrow}\omega$ and $(\tilde{T},\tilde{T'})\stackrel{RSK}{\longleftrightarrow}\tilde{\omega}$.
\end{exa}
\begin{rem}\label{rmk}It is clear that if we fix the contents $\mu$ and $\nu$, two different arrays $\omega_1\neq\omega_2\in M^{\mu,\nu}(\mathbb{Z}_\geq 0)$ when standardized will give two different permutations $\tilde{\omega}_1\neq\tilde{\omega}_2$. That is we have an injective map
$$\std:\MM{\mu}{\nu}\to S_d.$$
We therefore have an inverse
$$\std^{-1}:\std(\MM{\mu}{\nu})\to \MM{\mu}{\nu}$$
which is easily described as follows:
$$\left(\begin{array}{cccccc} 1 & 2 & \ldots & \nu_1 & \nu_1 +1 & \ldots \\ w(1) & w(2) & \ldots & w(\nu_1) & w(\nu_1+1) & \ldots \end{array}\right)
\mapsto \left(\begin{array}{cccccc} 1 & 1 & \ldots & 1 & 2 & \ldots \\ w'(1) & w'(2) & \ldots & w'(\nu_1) & w'(\nu_1+1) & \ldots\end{array}\right)
 $$
the first row is just replaced by $\nu_1$ $1$'s, followed by $\nu_2$ $2$'s and so on, while we have 
$$w'(k)=j\quad\text{ if }\quad w(k)\in\{\mu_1+\ldots+\mu_{j-1}+1,\ldots,\mu_1+\ldots+\mu_j\}.$$ 
\end{rem}
\section{RSK and Partial Flag Varieties} In this section we will use all the conventions of section \ref{varrsk} and the notations of section \ref{pfsst}.

We state and prove the main result of the first part of the paper, which generalizes Theorem \ref{stein}. The strategy for the proof is to use standardization and Lemma \ref{lem2} to reduce the problem to the case of complete flags.
\begin{thm}\label{rosso}Let $x\in\End(V)$ be a nilpotent transformation of Jordan type $\lambda$, $T\in\Ts_\lambda$, $S\in\T^{\nu}_\lambda$ be semistandard tableaux, and let $C_{x,T}$ and $C_{x,T'}$ be respectively the irreducible components of $\F^\mu_x$ and $\F^\nu_x$ corresponding to the tableaux $T$ and $T'$. 

Then, for generic $F\in C_{x,T}$ and $F'\in C_{x,T'}$, we have that the relative position matrix $M(F,F')$ is the same as the matrix $M(T,T')$ given by the RSK correspondence. \end{thm}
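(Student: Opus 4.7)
The overall plan is to lift the situation to complete flags, apply Theorem \ref{stein}, and then descend using Lemma \ref{lem2}. Let me outline the steps.

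First, I would exhibit the forgetting map $\pi \colon \F_x \to \F^\mu_x$ that sends a complete flag $G$ to the partial flag $(G_{\mu_1}, G_{\mu_1+\mu_2}, \ldots)$, and analogously $\pi' \colon \F_x \to \F^\nu_x$. Given a generic $F \in C_{x,T}$, I would pick a generic complete refinement $G \in \pi^{-1}(F)$ and similarly $G' \in (\pi')^{-1}(F')$. The key geometric claim is that, for $F$ generic in $C_{x,T}$ and the refinement chosen generically in the fiber, the resulting complete flag $G$ lies in the irreducible component $C_{x,\tilde T}$, where $\tilde T$ is the standardization defined in Section \ref{varrsk}. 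The analogous statement holds for $F'$ and $\tilde{T'}$.

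Next, given that step, Theorem \ref{stein} immediately gives that the relative position of $G$ and $G'$, which is generic as $F, F'$ vary in their components and the refinements vary generically in the fibers, is the permutation $w(\tilde T, \tilde{T'})$ obtained from classical RSK applied to the pair $(\tilde T, \tilde{T'})$. Now the matrix $M(F, F')$ is obtained from the permutation matrix of $w(G, G')$ by summing the entries in blocks of rows of widths $\mu_i$ and blocks of columns of widths $\nu_j$; this is precisely the inverse standardization map $\std^{-1}$ described at the end of Remark \ref{rmk}. Therefore
\[
M(F,F') \;=\; \std^{-1}\bigl(w(G,G')\bigr) \;=\; \std^{-1}\bigl(w(\tilde T, \tilde{T'})\bigr).
\]
Finally, by Lemma \ref{lem2} standardization commutes with the (modified) RSK correspondence, so
\[
\std^{-1}\bigl(w(\tilde T, \tilde{T'})\bigr) \;=\; M(T, T'),
\]
which is exactly what we want.

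The main obstacle is the geometric claim in step one, namely that a generic complete refinement of a generic $F \in C_{x,T}$ lies in $C_{x,\tilde T}$ with the specific top-to-bottom standardization convention of Section \ref{varrsk}. To verify this, I would analyse $\pi$ step by step: between $F_{i-1}$ and $F_i$ one must insert $\mu_i - 1$ intermediate $x$-stable subspaces, each step enlarging the dimension of the restriction by one and thus adding a single box to the Jordan type. Since the boxes labeled $i$ in $T$ lie in distinct rows (rows are strictly increasing in our semistandard tableaux), refining generically picks these boxes in some order, giving a standard tableau that reduces to $T$ under the collapse $\{1,\ldots,\mu_1\}\mapsto 1$, etc. The generic order is forced to be top-to-bottom within each block: indeed, for a generic $x$-stable line $\ell$ added inside the quotient, the resulting restriction of $x$ fills the topmost available row of the Young diagram first, which is exactly the convention used to define $\tilde T$. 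One can check this either directly on representatives using the explicit description of $C_{x,T}$ in Spaltenstein's setup \cite{Sp1}, or by invoking Haines' convolution description \cite{H}, where the fiber of $\pi$ over a generic point of $C_{x,T}$ is irreducible and is identified with the component of $\F_x$ indexed by the standardization. With that identification in hand, the remaining arguments are formal.
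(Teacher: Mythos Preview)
Your approach is essentially the same as the paper's: lift to complete flags via standardization, invoke Theorem \ref{stein}, and descend using Lemma \ref{lem2} together with the block-summing description of $\std^{-1}$ from Remark \ref{rmk}. The difference is purely in the direction of the lift. The paper starts upstairs: it shows directly that $p_\mu(\F_{x,\tilde T})\subset \F_{x,T}$ and that $p_\mu\colon \F_{x,\tilde T}\to \F_{x,T}$ is \emph{surjective}; then it takes the open dense locus $X_{\tilde T}\subset C_{x,\tilde T}$ where Theorem \ref{stein} holds and pushes it down, obtaining a constructible dense subset of $C_{x,T}$ on which the conclusion holds. You instead start downstairs and ask that a generic refinement of a generic $F\in C_{x,T}$ land in $C_{x,\tilde T}$.

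Your formulation of the key geometric step is correct, but your justification is the weakest part of the write-up. The heuristic ``a generic line fills the topmost available row first'' is not a proof, and the appeal to Spaltenstein/Haines would need to be made precise. In fact the cleanest argument is exactly the paper's: once you know $p_\mu(\F_{x,\tilde T})\subset \F_{x,T}$ (immediate from the definition of standardization) and that every fiber of $p_\mu$ over $\F^\mu_x$ is an irreducible product of full flag varieties, a dimension count shows $\F_{x,\tilde T}$ is dense in $p_\mu^{-1}(\F_{x,T})$, which is precisely your claim. Note also that you need the pair $(G,G')$ to be generic in $C_{x,\tilde T}\times C_{x,\tilde{T'}}$, not merely each factor generic separately; this is again most easily obtained from the paper's surjectivity statement, which makes $p_\mu\times p_\nu$ dominant so that the image of the good open set from Theorem \ref{stein} is dense downstairs. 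So your outline is right, but to close the gap you end up reproducing the paper's argument.
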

\begin{proof}
For a fixed $\mu=(\mu_1,\ldots,\mu_n)$ with $|\mu|=\mu_1+\ldots+\mu_n=d$, consider the map 
$$p_\mu:\F\to\F^\mu$$ 
that forgets some of the spaces, that is
$$(0=F_0,F_1,F_2,\ldots,F_{n-1},F_n=V)\mapsto (0=F_0,F_{\mu_1},F_{\mu_1+\mu_2},\ldots,F_{\mu_1+\ldots+\mu_{n-1}},F_n=V).$$

Clearly, if $F$ is any partial flag in $\F^\mu_x$ and $\tilde{F}\in p_\mu^{-1}(F)$, then $\tilde{F}\in\F_x$ because for all $j$ there is some $i$ such that
$$\tilde{F}_{\mu_1+\ldots+\mu_i}\subset\tilde{F}_{j-1}\subset\tilde{F}_j\subset\tilde{F}_{\mu_1+\ldots+\mu_{i+1}}$$
and 
$$x(\tilde{F}_j)\subset x(\tilde{F}_{\mu_1+\ldots+\mu_{i+1}})=x(F_{i+1})\subset F_i=\tilde{F}_{\mu_1+\ldots+\mu_i}\subset\tilde{F}_{j-1}.$$

Now, let $t:\F_x^\mu\to\Ts_\lambda$ be the map that associates a semistandard tableau to a partial flag, as in Definition \ref{eqdeft}. 

We fix a semistandard tableau $T$ and we let $\F_{x,T}:=t^{-1}(T)$, then $\F_{x,T}$ is a constructible dense subset of $C_{x,T}$.

Let $\tilde{T}$ be the standardization of $T$ and let $\F_{x,\tilde{T}} =t^{-1}(\tilde{T})\subset\F_x$ be the dense subset of $C_{x,\tilde{T}}$. The set $C_{x,\tilde{T}}$ is the irreducible component of the complete flag variety associated to the standard tableau $\tilde{T}$.

It is clear that if $\tilde{F}\in \F_{x,\tilde{T}}$, then we have $F=p_\mu(\tilde{F})\in \F_{x,T}$ because
$$x|_{F_i}=x|_{\tilde{F}_{\mu_1+\ldots+\mu_i}}$$
also, the map
$$p_\mu:\F_{x,\tilde{T}}\to \F_{x,T}$$
is surjective. This is because we can always find appropriate subspaces to complete a partial flag $F$ to a flag $\tilde{F}$ such that the restriction of $x$ to those subspaces has the Jordan type we want.

What we have said so far applies in the same way if we fix another semistandard tableau $T'$ of content $\nu$ and we consider the sets $\F_{x,T'}\subset\F^\nu_x$ and $\F_{x,\tilde{T'}}\subset\F_x$.

Now, let us fix two semistandard tableaux $T$ and $T'$ as in the statement of the theorem, and consider their standardizations $\tilde{T}$ and $\tilde{T'}$. For general complete flags $\tilde{F}\in C_{x,\tilde{T}}$ and $\tilde{F'}\in C_{x,\tilde{T'}}$, Theorem \ref{stein} tells us that $M(\tilde{F},\tilde{F'})=M(\tilde{T},\tilde{T'})$. We let then $X_{\tilde{T}}\subset C_{x,\tilde{T}}$, $X_{\tilde{T'}}\subset C_{x,\tilde{T'}}$ be the open dense subsets such that this is true.

Then $X_{\tilde{T}}\cap \F_{x,\tilde{T}}$ is constructible dense in $C_{x,\tilde{T}}$. Hence it contains an open dense subset and the image of
$$p_\mu:X_{\tilde{T}}\cap \F_{x,\tilde{T}}\to \F_{x,T}$$
is constructible dense in $\F_{x,T}$, therefore it is also dense in $C_{x,T}$. In the same way, $p_\nu(X_{\tilde{T'}}\cap \F_{x,\tilde{T'}})$ is constructible dense in $\F_{x,T'}$.
\begin{claim}If $F\in p_\mu(X_{\tilde{T}}\cap \F_{x,\tilde{T}})$ and $F'\in p_\nu(X_{\tilde{T'}}\cap \F_{x,\tilde{T'}})$ then $M(F,F')=M(T,T')$. \end{claim}
Let $\tilde{F}\in p_\mu^{-1}(F)$ and $\tilde{F'}\in p_\nu^{-1}(F')$, then by Lemma \ref{lem2} we have that 
$$\tilde{\omega}=\std(M(T,T'))=M(\tilde{T},\tilde{T'}).$$

Now let $\omega'=M(F,F')$. By the definition of relative position of flags, the array $\tilde{\omega}=M(\tilde{T},\tilde{T'})=M(\tilde{F},\tilde{F'})$ is such that for all $i,j$
$$ \card \left\{\begin{tabular}{r|l}
\multirow{2}{*}{$\left(\begin{array}{c} \tilde{u} \\ \tilde{w} \end{array}\right)\in \tilde{\omega}$}
& $\tilde{u}\in\{\nu_1+\ldots+\nu_{j-1}+1,\ldots,\nu_1+\ldots\nu_j\}$, \\
& $\tilde{w}\in\{\mu_1+\ldots+\mu_{i-1}+1,\ldots,\mu_1+\ldots+\mu_i\}$
\end{tabular}\right\} $$

\begin{align*} &= \dim\left(\df{\tilde{F}_{\mu_1+\ldots+\mu_i}\cap\tilde{F'}_{\nu_1+\ldots+\nu_j}}{(\tilde{F}_{\mu_1+\ldots+\mu_{i-1}}\cap \tilde{F'}_{\nu_1+\ldots+\nu_j}) + (\tilde{F}_{\mu_1+\ldots+\mu_{i}}\cap \tilde{F'}_{\nu_1+\ldots+\nu_{j-1}})}\right) \\
 &= \dim\left(\df{F_i\cap F'_j}{F_i\cap F'_{j-1}+F_{i-1}\cap F'_j}\right) \\
 &= \card\left\{\left(\begin{array}{c} u \\ w \end{array}\right)\in\omega' \left| \left(\begin{array}{c} u \\ w \end{array}\right)\right.= \left(\begin{array}{c} i \\ j \end{array}\right)\right\}
 \end{align*}
Therefore, by Remark \ref{rmk}, $\omega'=\std^{-1}(\tilde{\omega})$. It follows that $\std(\omega')=\tilde{\omega}$, that is 
$$\std(M(F,F'))=\std(M(T,T')).$$ 
Again by Remark \ref{rmk}, this implies that $M(F,F')=M(T,T')$. This concludes the proof of the claim. 

Since $p_\mu(X_{\tilde{T}}\cap \F_{x,\tilde{T}})$ and $p_\nu(X_{\tilde{T'}}\cap \F_{x,\tilde{T'}})$ are constructible dense in $C_{x,T}$ and $C_{x,T'}$ respectively, they each contain an open dense subset of the respective irreducible component, which proves the theorem.
\end{proof}

\section{Mirabolic Flag Varieties}
With this section, we start the second part of this paper, where we generalize the construction of Travkin (see \cite{T}). We keep the notation of Section \ref{notat}.

\subsection{$\gl(V)$-orbits in $\F^{\mu}\times\F^{\mu'}\times V$}\label{sec2}

Let $\mu=(\mu_1,\ldots,\mu_n)$, $\mu'=(\mu'_1,\ldots,\mu'_{n'})$ be two compositions of $d$. We consider the diagonal $G$-action on the set $\F^\mu\times\F^{\mu'}\times V$. So, let $(F,F',v)\in\F^{\mu}\times\F^{\mu'}\times V$ and look at the orbit $G\cdot(F,F',v)$. 

If $v=0$, this orbit lies in $\F^{\mu}\times\F^{\mu'}\times\{0\}\simeq\F^\mu\times\F^{\mu'}$. As in Section \ref{relposwa} we parametrize such orbits  by the set $\MM{\mu}{\mu'}$ of matrices with row sums $\mu$ and column sums $\mu'$ which we can also identify with the set of two rowed arrays of positive integers with row contents $\mu'$ and $\mu$.

If $v\neq 0$, the orbit $G\cdot(F,F',v)$ is the preimage of an orbit in $\F^{\mu}\times\F^{\mu'}\times\mathbb{P}(V)$. This is because for all $c\in k^{\times}$, $(F,F',cv)=c\Id\cdot(F,F',v)\in G\cdot(F,F',v)$.

The $G$-orbits on $\F^{\mu}\times\F^{\mu'}\times\mathbb{P}(V)$ have been parametrized in \cite[2.11]{MWZ} (see also \cite[2.2]{M}) by ``decorated matrices''. These are pairs $(M,\Delta)$, where $M$ is a matrix in $\MM{\mu}{\mu'}$ and $\Delta=\{(i_1,j_1),\ldots,(i_k,j_k)\}$ is a nonempty set  that satisfies 
$$1\leq i_1<\ldots <i_k\leq n,\qquad 1\leq j_k<\ldots <j_1\leq n'$$ 
and such that the entry $M_{ij}>0$ for all $(i,j)\in\Delta$.

We can concisely write down a pair $(M,\Delta)$, in a similar way to what is done in \cite{M}, by parenthesizing the entries of the matrix corresponding to $\Delta$.
\begin{exa}\label{exM1}$$M=\begin{pmatrix} 1 & 0 & 2 \\ 1 & 1 & 0 \\ 0 & 3 & 0 \end{pmatrix};\qquad\Delta=\{(1,3),(2,1)\}$$
$$(M,\Delta)=\begin{pmatrix} 1 & 0 & (2) \\ (1) & 1 & 0 \\ 0 & 3 & 0 \end{pmatrix} $$
\end{exa}
\begin{lem}\label{decmat}There is a 1-1 correspondence between the set of pairs $(M,\Delta)$ as above and the set of pairs $(\omega,\beta)$ where $\omega$ is a two rowed array and $\beta\subset\{1,\ldots,d\}$ is a nonempty subset such that if $i\in\{1,\ldots,d\}\setminus\beta$ and $j\in\beta$, either $u(i)>u(j)$ or $w(i)>w(j)$.\end{lem}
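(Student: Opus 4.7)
The plan is to leverage the identification of matrices $M$ with arrays $\omega$ from Definition \ref{arrtom}; only the correspondence between $\Delta$ and $\beta$ needs to be set up. The array $\omega$ is naturally partitioned into consecutive \emph{blocks}: each nonzero entry $M_{ij}$ yields a block of $M_{ij}$ consecutive positions $p$ with $(u(p), w(p)) = (j, i)$. Two positions $p, q$ in the same block satisfy $u(p) = u(q)$ and $w(p) = w(q)$, so the condition defining $\beta$, applied with $i = q$ and $j = p$, cannot hold unless $p, q$ both lie in $\beta$ or both lie outside; hence $\beta$ must be a union of blocks.

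Next I would reformulate the condition at the block level. Label each block by its matrix coordinates $(i, j)$ and order blocks by $(i', j') \leq (i, j)$ if and only if $i' \leq i$ and $j' \leq j$. The $\beta$ condition, applied to one position in each of two distinct blocks $B$ and $B'$ with $B \in \beta$ and $B' \notin \beta$, says precisely that $B'$ is not dominated by $B$ in this order. Equivalently, $\beta$ corresponds to a \emph{down-set} in the poset of nonzero blocks of $\omega$. The bijection is then the classical equivalence between down-sets in a finite poset and their antichains of maxima: in the forward direction $(M, \Delta) \mapsto (\omega, \beta)$ with
\[
\beta = \{p \in \{1, \ldots, d\} : \exists\,(i,j) \in \Delta \text{ with } w(p) \leq i \text{ and } u(p) \leq j\},
\]
while in the reverse direction, $\Delta$ is the set of block labels $(i, j)$ that are maximal among the blocks contained in $\beta$.

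The remaining checks are then routine: non-emptiness of $\Delta$ corresponds to non-emptiness of $\beta$; the positivity $M_{ij} > 0$ for $(i,j) \in \Delta$ holds because the corresponding blocks are nonempty subsets of $\omega$; and the inequality conditions on $\Delta$ (namely $i_1 < \ldots < i_k$ and $j_k < \ldots < j_1$) express exactly the antichain property of the maxima, since for two maxima we cannot have $i_l < i_{l'}$ and $j_l \leq j_{l'}$ simultaneously. The two constructions are mutually inverse by the standard fact that a finite down-set is recoverable from its antichain of maximal elements. The only mildly delicate step is the initial reformulation that forces $\beta$ to be a downward-closed union of blocks; everything that follows is a formal exercise.
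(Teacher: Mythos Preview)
Your argument is correct and follows essentially the same approach as the paper: both identify $M$ with $\omega$ via Definition \ref{arrtom}, observe that $\beta$ must be a union of blocks forming a down-set under the coordinatewise order on $\{(w(p),u(p))\}$, and then use the bijection between such down-sets and their antichains of maxima (which the paper phrases as ``outer corners of a Young diagram''). Your poset language is a slightly cleaner packaging of the same idea, but there is no substantive difference.
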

\begin{proof}The correspondence between $M$ and $\omega$ is just the identification we discussed in Definition \ref{arrtom}. Now, consider the map
$$\varphi:\{1,\ldots,d\}\to\{1,\ldots,n\}\times\{1,\ldots,n'\}$$
$$l\mapsto (w(l),u(l)).$$
Then $\Delta$ will be the subset of $\varphi(\beta)$ defined by
$$\Delta=\{(i,j)\in\varphi(\beta)|(i+1,j)\notin\varphi(\beta) \text{ or }(i,j+1)\notin\varphi(\beta)\}.$$
Given $\Delta$ we can recover $\beta$ in the following way: let
$$\Delta'=\{(i,j)\in\{1,\ldots,n\}\times\{1,\ldots,n'\}|\exists (i_0,j_0)\in\Delta \text{ s.t. }i\leq i_0\text{ or }j\leq j_0\}$$
then $\beta=\varphi^{-1}(\Delta')$.

It is not difficult to see that these definitions give inverse correspondences. 

Visually, $\varphi(\beta)$ identifies a set of positions in the matrix that fits in a Young diagram, and such that no other nonzero positions are in the diagram. The set $\Delta$ consists then of the outer corners of that diagram. 

Vice versa, given $\Delta$, $\Delta'$ is the set of all positions of the matrix weakly northwest of $\Delta$. Then $\beta=\varphi^{-1}(\Delta')$ consists of all the columns of the array corresponding to the nonzero positions in $\Delta'$.
\end{proof}
\begin{exa}If we take the decorated matrix $(M,\Delta)$ of Example \ref{exM1}, we have that
$$\Delta'=\{(1,1),(1,2),(1,3),(2,1)\};\quad\omega=\begin{pmatrix} 1 & 1 & 2 & 2 & 2 & 2 & 3 & 3 \\ 2 & 1 & 3 & 3 & 3 & 2 & 1 & 1 \end{pmatrix}$$
Then $\beta=\{1,2,7,8\}$.
\end{exa}
\begin{defn}\label{decarr}We define the set $\D^{\mu,\mu'}$ of ``decorated arrays'' to be the set of all pairs $(\omega,\beta)$, where $\omega\in\MM{\mu}{\mu'}$ and $\beta\subset\{1,\ldots,d\}$ is a (possibly empty) subset such that if $i\in\{1,\ldots,d\}\setminus\beta$ and $j\in\beta$, then either $u(i)>u(j)$ or $w(i)>w(j)$.\end{defn}
By Lemma \ref{decmat}, the set of decorated matrices (if we also allow $\Delta=\emptyset$) and decorated arrays are identified, so we might use either description of the set, depending on what is most convenient at each time. 

By the result in \cite[2.11]{MWZ} and Lemma \ref{decmat}, we can then parametrize the $G$-orbits on $\F^{\mu}\times\F^{\mu'}\times V$ with the set $\D^{\mu,\mu'}$. The pairs $(\omega,\beta)\in\D^{\mu,\mu'}$ with $\beta\neq\emptyset$ correspond to the $G$-orbits in $\F^{\mu}\times\F^{\mu'}\times\mathbb{P}(V)$, and the ones with $\beta=\emptyset$ correspond to the case of $v=0$. 

We are going to give a direct proof of this parametrization. In order to do that, we will use the following result of Travkin (\cite[Lemma 1]{T}).
\begin{lem}\label{lemM1} Let $A\subset\End(V)$ be an associative algebra with identity and $A^{\times}$ the multiplicative group of $A$. Suppose that the $A$-module $V$ has finitely many submodules. Then the $A^{\times}$-orbits in $V$ are in 1-1 correspondence with these submodules. Namely, each $A^{\times}$-orbit has the form
$$\Omega_S:=S\setminus \bigcup_{S'\subsetneq S}S'$$
where $S$ is an $A^{\times}$-submodule of $V$ and the union is taken over all proper submodules of $S$.
\end{lem}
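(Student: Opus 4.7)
My plan is to show that the sets $\Omega_S$ partition $V$ into exactly the $A^\times$-orbits. This splits into two largely independent tasks: verifying that the $\Omega_S$ form an $A^\times$-invariant partition of $V$ indexed by the submodules (with $\Omega_S$ empty exactly when $S$ is non-cyclic), and verifying that $A^\times$ acts transitively on each nonempty $\Omega_S$.

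For the first task I would use that, since $1 \in A$, the cyclic submodule $Av$ is the smallest $A$-submodule of $V$ containing $v$. This gives the clean reformulation $\Omega_S = \{v \in V : Av = S\}$: if $Av = S$ then no proper $S' \subsetneq S$ can contain $v$ (else $Av \subseteq S'$), and conversely if $v \in \Omega_S$ then $Av \subseteq S$ is a submodule containing $v$, forcing $Av = S$. From this it is immediate that $V = \bigsqcup_S \Omega_S$, and each $\Omega_S$ is $A^\times$-stable: for $g \in A^\times$ we have $Ag = A$, hence $A(gv) = (Ag)v = Av$.

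The second task, transitivity, is the main obstacle. Fix $v \in \Omega_S$ and let $u \in \Omega_S$ be arbitrary; then $Au = S = Av$ yields $a, b \in A$ with $u = av$ and $v = bu = bav$, so $ba - 1$ lies in the left ideal $I := \{c \in A : cv = 0\}$. Producing $g \in A^\times$ with $gv = u$ amounts to showing that the affine coset $a + I \subset A$ meets $A^\times$.

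I would attack this obstacle as follows. The hypothesis that $V$ has finitely many submodules makes $V$ Artinian as an $A$-module, and in the paper's setting $V$ is also finite-dimensional over $k$, so $A \subseteq \End(V)$ is itself a finite-dimensional $k$-algebra. Consequently $A^\times$ coincides with $\{c \in A : c \text{ is injective on } V\}$ and equals the complement of the Zariski-closed hypersurface $\{\det = 0\} \subseteq A$. It then suffices to exhibit a single invertible element of $a + I$: starting from the pseudo-inverse $b$ satisfying $ba \equiv 1 \pmod{I}$, I would combine $b$ with correction terms drawn from $I$ to produce a $g$ acting injectively on all of $V$, leveraging the Artinian structure in a Nakayama-style lifting argument. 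The delicate point is that $I$ is only a one-sided ideal, so the usual unit-lifting theorems for two-sided ideals modulo the Jacobson radical do not apply directly; the finiteness hypothesis on the submodule lattice of $V$ is precisely what rules out pathological cosets $a + I$ lying entirely in $\{\det = 0\}$ and makes the construction of $g$ possible.
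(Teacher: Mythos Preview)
The paper does not prove this lemma at all: it is quoted from Travkin \cite[Lemma~1]{T} and used as a black box. So there is no paper-side argument to compare against, only the statement, and your proposal must be judged on its own.

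Your first task is handled correctly. The identification $\Omega_S=\{v:Av=S\}$ is the right reformulation, and $A^\times$-invariance follows. However, for the asserted \emph{1-1} correspondence you must also show that $\Omega_S\neq\emptyset$ for \emph{every} submodule $S$, i.e.\ that every $A$-submodule of $V$ is cyclic. You note parenthetically that $\Omega_S$ is empty exactly when $S$ is non-cyclic, but you never argue that the finiteness hypothesis excludes this. This is where the hypothesis (together with $k$ being infinite, which is implicit throughout the paper) is actually used: a $k$-vector space cannot be a finite union of proper $k$-subspaces, so $S\neq\bigcup_{S'\subsetneq S}S'$.

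Your second task has a genuine gap. You correctly reduce transitivity to showing that the coset $a+I$ meets $A^\times$, but the sentence ``I would combine $b$ with correction terms drawn from $I$ \ldots\ in a Nakayama-style lifting argument'' is not a proof, and you yourself concede that the standard unit-lifting theorems do not apply because $I$ is only a left ideal. Nothing you have written forces $ba-1$ (or any modification of it) into the Jacobson radical, and without that the lifting strategy has no traction. A clean argument avoids this difficulty entirely: the linear surjection $A\to S$, $c\mapsto cv$, is an open map for the Zariski topology (it is a projection after splitting off $I$), so $A^\times v$ is a nonempty open subset of $S$; likewise for $A^\times u$. Since $S$ is irreducible over an infinite field, the two open subsets meet, and orbits that meet are equal. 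This replaces the unit-construction problem with a genericity argument and is presumably close to what Travkin does.
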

\begin{prop}There is a 1-1 correspondence between $G$-orbits in $\F^{\mu}\times\F^{\mu'}\times V$ and the set $\D^{\mu,\mu'}$.
\end{prop}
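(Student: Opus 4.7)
The strategy is to analyse $G$-orbits on $\F^\mu\times\F^{\mu'}\times V$ along the projection $\pi\colon \F^\mu\times\F^{\mu'}\times V\to\F^\mu\times\F^{\mu'}$. By the orbit-stabilizer principle, if we fix a pair $(F,F')$ with relative position $M\in\MM{\mu}{\mu'}$, then the $G$-orbits in $\pi^{-1}(G\cdot(F,F'))$ are in bijection with the orbits of the stabilizer $H:=\mathrm{Stab}_G(F,F')$ on $V$. Since the $G$-orbits on $\F^\mu\times\F^{\mu'}$ are already parametrized by $\MM{\mu}{\mu'}$, it suffices to classify the $H$-orbits on $V$ for each choice of $M$.

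Next, I would realize $H$ as the unit group $A^\times$ of the associative algebra
$$A=\{\phi\in\End(V)\mid \phi(F_i)\subseteq F_i,\ \phi(F'_j)\subseteq F'_j\text{ for all }i,j\},$$
so that Lemma~\ref{lemM1} applies once we show $V$ has finitely many $A$-submodules. Choose a decomposition $V=\bigoplus_{(i,j):\,M_{ij}>0}V_{ij}$ with $\dim V_{ij}=M_{ij}$ and $F_i\cap F'_j=\bigoplus_{i'\leq i,\,j'\leq j}V_{i'j'}$; such a decomposition exists by standard linear algebra. A short check shows that $A$ consists exactly of those $\phi$ for which $\phi(V_{ij})\subseteq\bigoplus_{i'\leq i,\,j'\leq j}V_{i'j'}$ for every nonzero cell $(i,j)$.

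The key step is to classify the $A$-submodules of $V$. Because $A$ contains the Levi $\prod_{(i,j)}\mathrm{GL}(V_{ij})$ together with the block idempotents onto each $V_{ij}$, any $A$-submodule $W$ decomposes as $W=\bigoplus_{(i,j)\in\Delta'}V_{ij}$ for some subset $\Delta'$ of the nonzero cells. Using the ``lowering'' maps $V_{ij}\to V_{i'j'}$ with $i'\leq i$, $j'\leq j$, which can be extended by zero to elements of $A$, one checks that $W$ is $A$-invariant if and only if $\Delta'$ is an order ideal with respect to the componentwise partial order on nonzero cells of $M$. In particular there are finitely many $A$-submodules, so Lemma~\ref{lemM1} gives a bijection between $A^\times$-orbits on $V$ and such order ideals~$\Delta'$.

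Finally, an order ideal $\Delta'$ is uniquely determined by its antichain of maximal elements $\Delta$. Writing $\Delta=\{(i_1,j_1),\ldots,(i_k,j_k)\}$ with $i_1<\cdots<i_k$, the incomparability condition forces $j_1>\cdots>j_k$, and $M_{i_l j_l}>0$ by construction; this is precisely the data of a decorated matrix. The empty ideal corresponds to the orbit $\{0\}$, i.e.\ $v=0$ and $\beta=\emptyset$. Combined with Lemma~\ref{decmat} this yields the desired bijection with $\D^{\mu,\mu'}$. The main technical obstacle is the rigorous identification of $A$-submodules with the order-ideal subspaces $\bigoplus_{(i,j)\in\Delta'}V_{ij}$; once this is in hand the rest is combinatorial bookkeeping already set up in Lemma~\ref{decmat}.
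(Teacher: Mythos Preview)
Your proposal is correct and follows essentially the same route as the paper: fibre over the $G$-orbits on $\F^\mu\times\F^{\mu'}$, identify the stabilizer with $A^\times$ for the algebra $A$ of endomorphisms preserving both flags, classify the finitely many $A$-submodules, and invoke Lemma~\ref{lemM1}. The only cosmetic difference is that the paper works with a basis $\{e_1,\dots,e_d\}$ indexed by columns of the array and describes submodules directly as $S(\beta)=\bigoplus_{i\in\beta}ke_i$, whereas you group the basis into the cell spaces $V_{ij}$ and describe submodules via order ideals $\Delta'$, then pass to the antichain $\Delta$ and finally to $\beta$ via Lemma~\ref{decmat}; these are the same submodules under the map $\varphi$ of that lemma. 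One small wording fix: $A$ contains the block-diagonal algebra $\prod_{(i,j)}\End(V_{ij})$ (not the Levi group, which lives in $A^\times$); with that correction your argument for why every $A$-submodule decomposes cellwise goes through.
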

\begin{proof}For each $\omega=\left(\begin{array}{cccc}u(1) & u(2) & \ldots & u(d) \\ w(1) & w(2) & \ldots & w(d) \end{array}\right)$ in $\MM{\mu}{\mu'}$, let $\Omega_\omega$ be the corresponding $G$-orbit in $\F^{\mu}\times\F^{\mu'}$. 

In particular, $(F,F')\in\Omega_\omega$ if and only if there exists a basis $\{e_i|i=1,\ldots,d\}$ of $V$ such that
\begin{align}\label{base}F_i= & \seq{e_r|w(r)\leq i} \\
\notag F'_j= & \seq{e_s|u(s)\leq j}.
\end{align}
For a fixed $\omega$, consider a point $(F,F')\in\Omega_\omega$ and let $H$ be its stabilizer in $G$. Then the $H$-orbits in $V$ are in 1-1 correspondence with the $G$-orbits of $\F^{\mu}\times\F^{\mu'}\times V$ consisting of points $(D,D',v)$ with $(D,D')\in\Omega_\omega$.

Let $A_F, A_{F'}\subset\End(V)$ respectively be the subalgebras that leave the partial flags $F, F'$ invariant, i.e.
$$A_F:=\{a\in\End(V)|a(F_i)\subset (F_i)\quad\forall i\}$$
$$A_{F'}:=\{a\in\End(V)|a(F'_i)\subset (F'_i)\quad\forall i\}$$
Let $A=A_F\cap A_{F'}$, then $H=A^{\times}$.
So pick a basis $\{e_i\}$ of $V$ satisfying \eqref{base}, and let $E_{ij}$ be the linear operator such that
$$E_{ij}e_r=\delta_{jr}e_i.$$
Then 
$$A=\bigoplus_{u(i)\leq u(j);w(i)\leq w(j)}k E_{ij}.$$
From this it follows that all the $A$-submodules of $V$ have the form $S(\beta):=\oplus_{i\in\beta}ke_i$, where $\beta$ is like in Definition \ref{decarr}. In particular, they are finite, so we can apply lemma \ref{lemM1} to conclude the proof.
\end{proof}
\begin{defn}We will denote by $\Omega_{\omega,\beta}$ the $G$-orbit in $\F^{\mu}\times\F^{\mu'}\times V$ corresponding to $(\omega,\beta)$.
\end{defn}
\begin{rem}The orbit $\Omega_{\omega,\beta}$ consists exactly of the triples $(F,F',v)$ such that there exists a basis $\{e_i|i=1,\ldots,d\}$ of $V$ that satisfies \eqref{base} and with
$$v=\sum_{i\in\beta}e_i.$$
\end{rem} 
\subsection{Conormal Bundles and Mirabolic RSK}
We consider the variety $\Xmm:=\F^{\mu}\times\F^{\mu'}\times V$, and its cotangent bundle $T^*(\Xmm)$. We know that, see \cite[4.1.2]{CG},
$$T^*(\F^{\mu})=\{(F,x)\in\F^{\mu}\times\N | x(F_i)\subset F_{i-1}\quad\forall i\}.$$
Therefore
$$T^*(\Xmm)=\{(F,F',v,x,x',v^*)\in\Xmm\times\N\times\N\times V^*|F\in\F^\mu_x;\quad F'\in\F^{\mu'}_{x'}\}.$$
We have the moment map
\begin{align*}T^*(\Xmm)& \to\gL(V)^*\simeq\gL(V) \\
(F,F',v,x,x',v^*) & \mapsto x+x'+v^*\otimes v.\end{align*}
We let $\Ymm$ be the preimage of $0$ under the moment map, then $\Ymm$ is the union of the conormal bundles of the $G$-orbits in $\Xmm$: 
$$\Ymm:=\{(F,F',v,x,x',v^*)\in T^*(\Xmm)|x+x'+v^*\otimes v=0\}=\bigsqcup_{\omega,\beta}N^*\Omega_{\omega,\beta}.$$
Hence, all the irreducible components of $\Ymm$ are the closures $\overline{N^*\Omega_{\omega,\beta}}$.

Now, consider the variety $Z$ of quadruples
$$Z:=\{(x,x',v,v^*)\in \N\times\N\times V\times V^*|x+x'+v^*\otimes v=0\}.$$
We then have a projection
\begin{align*}\pi:\Ymm & \to Z \\
(F,F',v,x,x',v^*) & \mapsto (x,x',v,v^*).\end{align*}
We let $\PP$ be the set of pairs of partitions $(\lambda,\theta)$ such that $|\lambda|=d$ and $\lambda_i\geq\theta_i\geq\lambda_{i+1}$ for all $i$.
\begin{rem}The set $\PP$ parametrizes $G$-orbits on $\N\times V$, as is proved independently in both \cite[Theorem 1]{T} and \cite[Proposition 2.3]{AH}. In particular, $(x,v)$ is in the orbit corresponding to $(\lambda,\theta)$ if the Jordan type of $x$ is $\lambda$ and the Jordan type of $x|_{V/k[x]v}$ is $\theta$.\end{rem} 
Define the set of triples $\TT:=\{(\lambda,\theta,\lambda')|(\lambda,\theta)\in\PP; (\lambda',\theta)\in\PP\}$. 

For any $\textbf{t}=(\lambda,\theta,\lambda')\in\TT$, we write $Z^{\textbf{t}}$ for the subset of quadruples $(x,x',v,v^*)\in Z$ such that the Jordan types of $x,x'$ and $x|_{V/k[x]v}$ are respectively $\lambda, \lambda'$ and $\theta$.
\begin{rem}Notice that in the previous statement we did not break any symmetry by choosing $x$ instead of $x'$, because if $x+x'+v^*\otimes v=0$, then $k[x]v=k[x']v$ and $x|_{V/k[x]v}=-x'|_{V/k[x']v}$.
\end{rem}

Now if $\tilde{\omega}=(\omega,\beta)\in\D^{\mu,\mu'}$,  we can consider a point $y=(F,F',v,x,x',v^*)$ in the variety $Y_{\tilde{\omega}}:=\overline{N^*(\Omega_{\tilde{\omega}})}$. In particular we can take $y\in N^*(\Omega_{\tilde{\omega}})$.

Then $\pi(y)\in \Zt$ for some $\textbf{t}=(\lambda, \theta, \lambda')\in\TT$. Now, $\Zt$ is irreducible, as is shown in the proof of Proposition 1 in \cite{T}. Hence this $\textbf{t}=\textbf{t}(y)$ will be the same for all $y$ in an open dense subset of $Y_{\tilde{\omega}}$. With this choice of $y$, we can then denote $\textbf{t}=\textbf{t}(\tlo)$ to emphasize that it depends only on $\tilde{\omega}$. Let $T=T(\tilde{\omega})\in\T_\lambda^{\mu}$ and $T'=T'(\tilde{\omega})\in\T_{\lambda'}^{\mu'}$ such that $F=F(y)\in\F_{x,T}$ and $F'=F'(y)\in\F_{x',T'}$.

\begin{prop}\label{prop1}The assignment $\tlo\mapsto (\emph{\textbf{t}}(\tlo),T(\tlo),T'(\tlo))$, that we just described, gives a 1-1 correspondence 
$$\D^{\mu,\mu'}\longleftrightarrow \{((\lambda,\theta,\lambda'),T,T')|(\lambda,\theta,\lambda')\in\emph{\TT}, T\in\T_\lambda^\mu, T'\in\T_{\lambda'}^{\mu'}\}.$$ \end{prop}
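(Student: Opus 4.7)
The strategy is to reduce to the complete flag case $\mu = \mu' = (1^d)$, where the statement is Travkin's mirabolic RSK \cite[Theorem 2]{T}, by extending the standardization argument from the proof of Theorem \ref{rosso}. I would first extend standardization to decorated arrays by $\std(\omega, \beta) := (\std(\omega), \beta)$, keeping $\beta \subseteq \{1, \ldots, d\}$ unchanged. The decorated condition of Definition \ref{decarr} is preserved, because if $u(i) > u(j)$ or $w(i) > w(j)$ holds in $(\omega, \beta)$ for some $i \notin \beta, j \in \beta$, the corresponding strict inequality persists in $\std(\omega)$ (standardization only breaks ties between equal entries). This gives an injection $\D^{\mu, \mu'} \hookrightarrow \D^{(1^d), (1^d)}$; dually, $\std \times \std$ embeds $\T_\lambda^\mu \times \T_{\lambda'}^{\mu'}$ into $\T_\lambda \times \T_{\lambda'}$.

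Next I would show that the geometric assignment factors through these standardizations. Given $\tlo = (\omega, \beta) \in \D^{\mu, \mu'}$ and a generic point $y = (F, F', v, x, x', v^*) \in N^*\Omega_{\tlo}$ of the type used in the definition of the assignment, I would use the projections $p_\mu$, $p_{\mu'}$ from Theorem \ref{rosso} to refine $F$ to a complete $x$-stable flag $\tilde{F}$ with $t(\tilde{F}) = \std(T)$, and similarly lift $F'$ to $\tilde{F'}$ with $t(\tilde{F'}) = \std(T')$. Using the basis description of orbits from Section \ref{sec2}, the triple $(\tilde{F}, \tilde{F'}, v)$ lies in $\Omega_{\std(\tlo)}$; since $(x, x', v, v^*)$ is unchanged, the moment map equation still holds and $\textbf{t}$ is preserved, so $\tilde{y} := (\tilde{F}, \tilde{F'}, v, x, x', v^*)$ is a generic point of $N^*\Omega_{\std(\tlo)}$. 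Consequently the geometric assignment fits in the commutative square
$$\begin{CD}
\D^{\mu, \mu'} @>>> \{((\lambda,\theta,\lambda'), T, T')\} \\
@VV\std V @VV\Id\times\std\times\std V \\
\D^{(1^d), (1^d)} @>\text{Travkin}>> \{((\lambda,\theta,\lambda'), \tilde{T}, \tilde{T'})\}
\end{CD}$$
whose bottom row is Travkin's bijection.

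Since the bottom arrow is a bijection and the vertical arrows are injective, the top arrow is injective. For surjectivity, given a target triple $((\lambda, \theta, \lambda'), T, T')$, applying Travkin to $((\lambda, \theta, \lambda'), \std(T), \std(T'))$ gives a decorated permutation $(\pi, \beta) \in \D^{(1^d), (1^d)}$, and one would then show $(\std^{-1}(\pi), \beta) \in \D^{\mu, \mu'}$ maps back to the original triple. I expect the main obstacle to be this matching-of-images step: verifying that $\std(\D^{\mu, \mu'}) \subseteq \D^{(1^d), (1^d)}$ corresponds under Travkin's bijection exactly to $\{((\lambda, \theta, \lambda'), \tilde{T}, \tilde{T'}) : \tilde{T} \in \std(\T_\lambda^\mu), \tilde{T'} \in \std(\T_{\lambda'}^{\mu'})\}$. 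This is the mirabolic analogue of Lemma \ref{lem2}; geometrically it reduces to showing that the refinement map $p_\mu \times p_{\mu'}$ is reversible on generic points of the conormal bundles in question, which will follow from the same density and irreducibility considerations used in the proof of Theorem \ref{rosso}.
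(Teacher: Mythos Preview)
Your approach is genuinely different from the paper's, and it has a gap at the genericity step.

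The paper does not reduce to Travkin's bijection via standardization. Instead it argues directly by dimension: for each triple $(\textbf{t},T,T')$ with $\textbf{t}=(\lambda,\theta,\lambda')$, it defines the locally closed stratum
\[
Y^{\textbf{t},T,T'}=\{y\in\Ymm \mid \pi(y)\in Z^{\textbf{t}},\ F(y)\in\F_{x,T},\ F'(y)\in\F_{x',T'}\},
\]
shows each stratum is irreducible (via the fibration over the irreducible $Z^{\textbf{t}}$ with fibers $\F_{x,T}\times\F_{x',T'}$), and then computes $\dim Y^{\textbf{t},T,T'}=\dim\Ymm$ using the known value of $\dim Z^{\textbf{t}}$ from the complete-flag case together with the Spaltenstein dimension formula for $\F_x^{\mu}$. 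Since the strata partition $\Ymm$, their closures are exactly the irreducible components; as $\D^{\mu,\mu'}$ also indexes these components (via conormal closures), the assignment is automatically a bijection. No genericity tracking or standardization is needed.

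The gap in your argument is the sentence ``so $\tilde{y}:=(\tilde{F},\tilde{F}',v,x,x',v^*)$ is a generic point of $N^*\Omega_{\std(\tlo)}$.'' You have arranged $\tilde{y}\in N^*\Omega_{\std(\tlo)}$ with prescribed tableaux $\std(T),\std(T')$ and the same $\textbf{t}$, but nothing in your construction forces $\tilde{y}$ to land in the open dense locus on which the generic invariants $(\textbf{t}(\std(\tlo)),T(\std(\tlo)),T'(\std(\tlo)))$ are computed. Without that, the commutative square does not follow: you only know the invariants of your particular lift, not of a generic point of the target stratum. To repair this you would need to prove that the forgetful map $Y_{\std(\tlo)}\to Y_{\tlo}$ is dominant with irreducible source, so that a generic fiber over a generic point contains a generic point; establishing that essentially requires the irreducibility and equidimensionality statements that constitute the paper's direct proof. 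The same issue resurfaces in your surjectivity step, which you correctly flag as the main obstacle but do not resolve. A secondary point: Theorem~\ref{rosso} concerns two flags fixed by the \emph{same} nilpotent, so it does not directly control the relative position of your refinements $\tilde{F}\in\F_x$ and $\tilde{F}'\in\F_{x'}$; you would need a separate argument (e.g.\ refining along a common adapted basis) to guarantee $(\tilde{F},\tilde{F}',v)\in\Omega_{\std(\tlo)}$.
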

\begin{proof}Consider the set $Y^{\textbf{t},T,T'}\subset\Ymm$ defined by
$$Y^{\textbf{t},T,T'}=\{y\in\Ymm|\pi(y)\in\Zt, F(y)\in\F_{x,T}, F'(y)\in\F_{x',T'}\}.$$ 
Then for all $(\textbf{t},T,T')$, $Y^{\textbf{t},T,T'}$ is a locally closed subset of $\Ymm$ and
$$\Ymm=\bigsqcup_{\textbf{t},T,T'}Y^{\textbf{t},T,T'}.$$
\begin{claim} These locally closed subsets are irreducible and $\dim Y^{\textbf{t},T,T'}=\dim\Ymm$ for all $\textbf{t},T,T'$.\end{claim}
We look at the projection $\pi|_{Y^{\textbf{t},T,T'}}:Y^{\textbf{t},T,T'}\to\Zt$. All the fibers of this map are of the form 
$$\pi^{-1}((\textbf{t},T,T'))=\{y\in Y^{\textbf{t},T,T'}|F(y)\in\F_{x,T},F'(y)\in\F_{x',T'}\}\simeq\F_{x,T}\times\F_{x',T'}.$$
It follows that they are irreducible and they have the same dimension. The set $\Zt$ is also irreducible, hence the sets $Y^{\textbf{t},T,T'}$ are irreducible. 

From now on in the paper we will use the notation $(a^{d_1},b^{d_2},\ldots)$ for the sequence $(a,\ldots,a,b,\ldots,b,\ldots)$ where $a$ appears $d_1$ times, $b$ appears $d_2$ times and so on.

From Travkin's proof of Proposition 1 in \cite{T}, it follows that the statement about dimensions is true when we consider the case of complete flags. That is, when $\mu=\mu'=(1^d)$, we let $Y:=Y^{(1^d),(1^d)}$ and we have $\dim Y=\dim Y^{\textbf{t},T,T'}$ where $T,T'$ are standard tableaux. 
That implies that if $\textbf{t}=(\lambda,\theta,\lambda')$, then
\begin{align*}\dim\Zt & = \dim Y^{\textbf{t},T,T'}-\dim(\F_{x,T}\times\F_{x',T'})\\
 & = \dim Y-(\dim\F_x+\dim\F_{x'}) \\
 & = d^2-n_\lambda-n_{\lambda'} \end{align*}
where $n_\lambda=\sum_i (i-1)\lambda_i$.

In the case of partial flags, we know that
$$\dim\Ymm=\dim\Xmm=d^2+d-\frac{1}{2}\sum_i\mu_i^2-\frac{1}{2}\sum_j\mu'^2_j.$$
Further, for $\textbf{t}=(\lambda,\theta,\lambda')$,
\begin{align*}\dim Y^{\textbf{t},T,T'} & =\dim\Zt+\dim(\F_{x,T}\times\F_{x',T'}) \\
 & = \dim\Zt+\dim\F_x^\mu+\dim\F_{x'}^{\mu'} \\
 & = d^2-n_\lambda-n_{\lambda'}+\left(n_\lambda-\frac{1}{2}\left(-d+\sum_i\mu_i^2\right)\right)
 + \left(n_{\lambda'}-\frac{1}{2}\left(-d+\sum_j\mu'^2_j\right)\right) \\
 & = d^2+d-\frac{1}{2}\sum_i\mu_i^2-\frac{1}{2}\sum_j\mu'^2_j \\
 & = \dim\Ymm.\end{align*}
This concludes the proof of the claim. Now, the claim implies that the irreducible components of $\Ymm$ are exactly the closures of the sets $Y^{\textbf{t},T,T'}$. This is enough to prove the proposition because the set $\D^{\mu,\mu'}$ also parametrizes the same irreducible components.
\end{proof}
\begin{defn}\label{mirrsk}The map $\tlo\mapsto(\textbf{t}(\tlo),T(\tlo),T'(\tlo))$ of Proposition \ref{prop1} is called the \emph{mirabolic Robinson-Schensted-Knuth correspondence}.\end{defn}

\section{Combinatorial description of the mirabolic RSK correspondence}
In this section we will describe an algorithm that takes as input a decorated array $\tlo=(\omega,\beta)\in\D^{\mu,\mu'}$ and gives as output a triple $(\textbf{t},T,T')$, with $\textbf{t}=(\lambda,\theta,\lambda')\in\TT$, $T\in\T^\mu_\lambda$, $T'\in\T^{\mu'}_{\lambda'}$. We will then prove that this is the same as the mirabolic RSK correspondence defined geometrically in the previous section. 

\subsection{The Algorithm}
In the algorithm we describe, the row bumping convention is that a new entry $z$ will bump the left-most entry in the row which is \emph{greater or equal} to $z$, as in Section \ref{varrsk}.

\begin{defn}\label{algo}As an input, we have $\tlo=(\omega,\beta)$ where 
$$\omega=\left(\begin{array}{cccc}u(1) & u(2) & \ldots & u(d) \\ w(1) & w(2) & \ldots & w(d) \end{array}\right),\qquad\beta\subset\{1,\ldots,d\}.$$
\begin{itemize}
\item At the beginning, set $T_0=T_0'=\emptyset$ and let $R$ be a single row consisting of the numbers $d+1,\ldots,2d$ 
$$R=\Yboxdim15pt \young({d+1}{d+2}\ldots\yd)$$
\item For $i=1,2,\ldots,d$
\begin{itemize}
\item If $i\in\beta$, let $T_i$ be the tableau obtained by inserting $w(i)$ into the tableau $T_{i-1}$ via row bumping.
\item If $i\notin\beta$, insert $w(i)$ into $R$ by replacing the least element $z\in R$ that is greater or equal to $w(i)$. Then let $T_i$ be the tableau obtained by inserting $z$ into $T_{i-1}$ via row bumping.
\item Construct $T'_{i}$ by the usual recording procedure. That is add a new box to $T'_{i-1}$ in the same place where the row bumping for $T_i$ terminated, and put $u(i)$ in the new box.
\end{itemize}
\item At this point we have $T_d,T'_d$ two semistandard tableaux with $d$ boxes, and the single row $R$. We let $T':=T'_d$ and $\lambda'$ will be its shape (which is also the same shape of $T_d$). 
\item Insert, via row bumping, $R$ into $T_d$, starting from the left. Call $T_{2d}$ the resulting tableau.
\item Let $\nu=(\nu_1,\nu_2,\ldots)$ be the shape of $T_{2d}$, then we have $\theta=(\theta_1,\theta_2,\ldots):=(\nu_2,\nu_3,\ldots)$. That is we define $\theta$ to be the partition obtained from $\nu$ by removing the first part.
\item We let $T:=T_{2d}^{(d)}$, that is $T$ is the tableau obtained from $T_{2d}$ by removing all the boxes with numbers strictly bigger than $d$. We then have $\lambda$ be the shape of $T$.
\item The output is $((\lambda,\theta,\lambda'),T,T')$.  
\end{itemize}
\end{defn}
\begin{thm}\label{thm2}For all $\tlo\in\D^{\mu,\mu'}$, the triple $(\textbf{t}(\tlo),T(\tlo),T'(\tlo))$ of Definition \ref{mirrsk} is the same as the triple obtained by applying the algorithm \ref{algo} to $\tlo$.\end{thm}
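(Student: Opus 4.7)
The plan is to reduce to the complete flag case already handled by Travkin in \cite{T}, via a standardization argument parallel to that used in the proof of Theorem \ref{rosso}. Extend the standardization map of Section \ref{varrsk} to decorated arrays by setting
$$\std:\D^{\mu,\mu'}\longrightarrow\D^{(1^d),(1^d)},\qquad\std((\omega,\beta)):=(\std(\omega),\beta),$$
which turns a decorated array for partial flags into a decorated permutation; the subset $\beta\subset\{1,\ldots,d\}$ is preserved verbatim, since array positions are unchanged under standardization.

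The first step is the \emph{combinatorial compatibility}: running Algorithm \ref{algo} on $\std(\tlo)$ produces the same partition $\theta$ as running it on $\tlo$, and produces as tableaux the standardizations of the semistandard tableaux $T,T'$ obtained from $\tlo$. This extends Lemma \ref{lem2} by accommodating the auxiliary row $R=(d+1,\ldots,2d)$. The key observations are: (i) every entry of $R$ strictly exceeds every entry of $\omega$, so the $R$-entry selected at each step depends only on the relative order of the entries of $\omega$, not on their exact values; (ii) the greater-or-equal bumping convention is precisely matched by the leftmost-decreasing standardization convention, so runs of equal $w$-values behave insertion-wise as strictly decreasing runs, as in the proof of Lemma \ref{lem2}; (iii) the shape of $T_{2d}$ is a Knuth invariant of the full input sequence and hence unchanged by standardization, so $\theta$ is preserved. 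A step-by-step comparison of the two runs yields the asserted commutativity.

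The second step is the \emph{geometric compatibility}: the forgetful map $\pi:=p_\mu\times p_{\mu'}\times\Id:\F\times\F\times V\to\Xmm$ carries points of $Y^{(1^d),(1^d)}$ into $\Ymm$, since the moment map condition $x+x'+v^*\otimes v=0$ is unaffected and the $x$-invariance of a complete flag forces that of its image under $p_\mu$ (as in the proof of Theorem \ref{rosso}). Moreover $\pi(\Omega_{\std(\tlo)})\subset\Omega_\tlo$ on the base, by destandardizing the relative position matrix, and a generic point of $\overline{N^*\Omega_{\std(\tlo)}}$ maps to a generic point of $\overline{N^*\Omega_\tlo}$. The quadruple $(x,x',v,v^*)$ is unchanged by $\pi$, so the Jordan type data $\textbf{t}=(\lambda,\theta,\lambda')$ is preserved; and the tableaux attached to $F=p_\mu(\tilde{F}),F'=p_{\mu'}(\tilde{F'})$ via Definition \ref{eqdeft} are the destandardizations of those attached to $\tilde{F},\tilde{F'}$, by the same argument used in the proof of Theorem \ref{rosso}.

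Combining the two compatibilities with Travkin's theorem for complete flags (the case $\mu=\mu'=(1^d)$ of the statement, proved in \cite{T}) forces the algorithm and the geometric construction to agree for $\tlo$: both produce triples whose standardizations coincide with Travkin's output on $\std(\tlo)$, and each triple is uniquely determined by its standardization together with the content $(\mu,\mu')$. The main obstacle is the geometric step, in particular verifying that $\pi$ maps a dense open subset of $\overline{N^*\Omega_{\std(\tlo)}}$ into $\overline{N^*\Omega_\tlo}$ so that the irreducible component labels transfer correctly. The Jordan type part is immediate because the cotangent data is unchanged under $\pi$, but the tableau matching requires the $p_\mu$-density argument of Theorem \ref{rosso} carried over to the conormal bundle setting, tracking the moment map constraint throughout.
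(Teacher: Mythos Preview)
Your geometric compatibility step rests on a false claim. You assert that ``the $x$-invariance of a complete flag forces that of its image under $p_\mu$'', i.e.\ that $\tilde{F}\in\F_x$ implies $p_\mu(\tilde{F})\in\F^\mu_x$. But the condition $x(\tilde{F}_i)\subset\tilde{F}_{i-1}$ for all $i$ yields only $x(\tilde{F}_{a_j})\subset\tilde{F}_{a_j-1}$ (with $a_j=\mu_1+\cdots+\mu_j$), whereas membership in $\F^\mu_x$ demands the strictly stronger $x(\tilde{F}_{a_j})\subset\tilde{F}_{a_{j-1}}$. Concretely, for $d=2$, $\mu=(2)$, and $x$ a single nonzero Jordan block, the unique complete flag in $\F_x$ projects to the point $(0\subset V)\in\F^{(2)}$, which lies in $\F^{(2)}_x$ only when $x=0$. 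The passage in the proof of Theorem~\ref{rosso} that you cite establishes precisely the \emph{opposite} implication (if $F\in\F^\mu_x$ then every refinement lies in $\F_x$). Consequently there is no map $Y^{(1^d),(1^d)}\to\Ymm$ fixing $(x,x',v^*)$, and your transfer of Travkin's result along $\pi$ does not go through.

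The reverse direction---lifting a point of $N^*\Omega_{\tlo}$ together with a generic complete refinement into $Y^{(1^d),(1^d)}$---is well-defined, and a dimension count shows the image has full dimension in some component. But carrying this out would still require identifying that component as $Y_{\std(\tlo)}$ and proving that a generic refinement of $F\in C_{x,T(\tlo)}$ lands in $C_{x,\std(T(\tlo))}$; neither is addressed in your sketch, and neither is automatic.

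The paper's proof follows an entirely different route and does not invoke Travkin's complete-flag theorem at all. It embeds $\omega$ as the central block of a larger array $\omega_+\in\MM{(1^d,\mu,1^d)}{(1^d,\mu',1^d)}$ in a $3d$-dimensional ambient space, and then analyses directly the $2d$-dimensional subspace $S=(k[x^*]v^*)^\perp$ for a generic conormal vector there. The auxiliary row $R$ of the algorithm and the subsets $\delta_m$ acquire explicit geometric meaning in this enlarged picture, and $T(\tlo)$, $T'(\tlo)$, $\theta$ are read off from the flags $F\cap S$, $F'\cap S$ and the Jordan type of $x|_S$ via the ordinary (non-mirabolic) RSK of Theorem~\ref{rosso}.
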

The last section of this paper is a proof of this theorem. In Appendix \ref{append} we give an example that illustrates the result and the algorithm.

\subsection{Proof of Theorem \ref{thm2}}
Let $\tlo=(\omega,\beta)\in\D^{\mu,\mu'}$ with $\omega=\left(\begin{smallmatrix} u(1) & \ldots & u(d) \\ w(1) & \ldots & w(d) \end{smallmatrix}\right)$, $\beta\subset\{1,\ldots,d\}$. We want to show that the triple $(\textbf{t}(\tlo),T(\tlo),T'(\tlo))$ of Definition \ref{mirrsk} is the same as what we get applying the algorithm of Definition \ref{algo}. 

Consider $\tlo_+ =(\omega_+,\beta_+)\in\D^{(1^d,\mu,1^d),(1^d,\mu',1^d)}$ defined by
\begin{align*} \omega_+ & =\left(\begin{array}{cccc} u_+(1) & u_+(2) & \ldots & u_+(3d) \\ w_+(1) & w_+(2) & \ldots & w_+(3d) \end{array}\right) \\
\intertext{ where }
u_+(i) & = \left\{\begin{array}{lcl} i & \text{ if } & i\leq d\quad\text{ or if }\quad 2d+1\leq i\leq 3d \\
                                     u(i-d)+d & \text{ if }& d+1\leq i\leq 2d \end{array}\right. \\
w_+(i) & = \left\{\begin{array}{lcl} i+2d & \text{ if } & i\leq d \\
                                     w(i-d)+d & \text{ if } & d+1\leq i \leq 2d \\
                                     i-2d & \text{ if } & 2d+1\leq i\leq 3d \end{array}\right. \\
\beta_+ & = \{i+d|i\in\beta\}                                                                         
\end{align*}
If we look at $\omega_+$ as a matrix, we can visualize it as a block matrix:
$$\omega_+ =\left(\begin{array}{lll} \textbf{0} & \textbf{0} & \textbf{I}_d \\
                         \textbf{0} & \omega & \textbf{0} \\
                         \textbf{I}_d & \textbf{0} & \textbf{0} \end{array}\right) $$
where $\textbf{0}$ is a block of zeros and $\textbf{I}_d$ is the identity $d\times d$ matrix.

Or, as an array,
$$\omega_+ =\left(\begin{array}{ccccccccc} 1 & \ldots & d & u(1)+d & \ldots & u(d)+d & 2d+1 & \ldots & 3d \\
                                          2d+1 & \ldots & 3d & w(1)+d & \ldots & w(d)+d & 1 & \ldots & 3d \end{array}\right).$$                        Then we have a corresponding variety $Y_{\tlo_+}$ which is the closure of $N^*\Omega_{\tlo_+}$. Since $Y_{\tlo_+}$ is irreducible, all the discrete combinatorial data associated to a point $y\in Y_{\tlo_+}$ will agree on an open dense subset. So we let $y=(F,F',v,x,x',v^*)$ be such a general point, where $F$ and $F'$ are partial flags in a $3d$-dimensional vector space $V_+$, $v\in V_+$ and $x+x'+v^*\otimes v=0$.

Choose a basis $\{e_1,e_2,\ldots,e_{3d}\}$ of $V_+$ that satisfies
\begin{align}\notag F_i &=\seq{e_r|w_+(r)\leq i} \\
\label{bas} F'_j &=\seq{e_s|u_+(s)\leq j} \\
\notag v &=\sum_{i\in\beta_+}e_i \end{align}
and let $\{e_i^*\}$ be the dual basis of $V_+^*$.

\begin{defn}For $m\geq 1$, we define inductively two sequences $\{\gamma_m\}$, $\{\delta_m\}$ of subsets of $\{1,\ldots,3d\}$.
\begin{align*}\gamma_1 & :=\{1,\ldots,3d\}\setminus\beta_+  \\
\delta_m & :=\{i\in\gamma_m|\quad\forall j\in\gamma_m,\quad u_+(j)\geq u_+(i)\text{ or }w_+(j)\geq w_+(i)\} \\
\gamma_{m+1}& :=\gamma_m\setminus\delta_m     \end{align*}
\end{defn}
It is easy to see that for all $m=1,\ldots,d$, the set $\delta_m$ consists of the elements $m$, $2d+m$ plus some subset of $\{d+1,\ldots,2d\}$. Also, $\delta_m=\gamma_m=\emptyset$ for all $m>d$.

\begin{lem}\label{claim}For a general conormal vector $(x,x',v^*)$ at the point $(F,F',v)$, we have for $1\leq m\leq d-1$ 
$$(x^*)^m v^*=\sum\limits_{i\in\gamma_{m+1}}\alpha_{m,i}e_i^*$$ 
with $\alpha_{m,m+1},\alpha_{m,m+2d+1}$ both nonzero.\end{lem}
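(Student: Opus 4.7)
The plan is to proceed by induction on $m$, with the ``$m=0$'' statement that $v^* = \sum_{q \in \gamma_1}\eta_q e_q^*$ (with generically nonzero $\eta_1,\eta_{2d+1}$) serving as the base. To set this up, I will write the moment map equation $x + x' + v^* \otimes v = 0$ in matrix form using the basis $\{e_i\}$ satisfying \eqref{bas}. Combined with the cotangent constraints $x_{pq}=0$ unless $w_+(p)<w_+(q)$ (and $x'_{pq}=0$ unless $u_+(p)<u_+(q)$), this partitions pairs $(p,q)$ into four cases; the case where neither strict inequality holds forces $[p\in\beta_+]\eta_q = 0$, and specializing to $p=q\in\beta_+$ gives $\eta_q=0$ for every $q\in\beta_+$. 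Thus $v^*$ is supported on $\gamma_1$ with the $\eta_q$ ($q\in\gamma_1$) acting as independent free parameters on the conormal fiber, so generically $\eta_1,\eta_{2d+1}\neq 0$.

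For the inductive step, write $\phi_{m-1} := (x^*)^{m-1}v^* = \sum_{i\in\gamma_m}\alpha_{m-1,i}e_i^*$ and compute $\phi_m = x^*\phi_{m-1}$, whose coefficient at $e_q^*$ is $c_q = \sum_{i\in\gamma_m}\alpha_{m-1,i}\,x_{iq}$. To see $\phi_m$ is supported on $\gamma_{m+1}$, I check $c_q = 0$ whenever $q\in\beta_+$ or $q\in\delta_k$ for some $k\leq m$: in the first case, Definition \ref{decarr} forces $u_+(i)>u_+(q)$ for any $i\in\gamma_m$ (so $i\notin\beta_+$) with $w_+(i)<w_+(q)$; in the second case, the minimality of $q$ inside $\gamma_k\supset\gamma_m$ yields the same conclusion. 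Either way, $x_{iq}$ falls in the ``only $w$'' subcase of the moment map, so $x_{iq} = -[i\in\beta_+]\eta_q = 0$ since $\gamma_m\cap\beta_+ = \emptyset$.

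For the nonvanishing of $\alpha_{m,m+1}$ and $\alpha_{m,2d+m+1}$, I identify which $p\in\gamma_m$ can contribute. Since $\delta_k$ always contains $k$ and $2d+k$, one has $\gamma_m\cap\{1,\ldots,m\} = \{m\}$ and $\gamma_m\cap\{2d+1,\ldots,2d+m\} = \{2d+m\}$. For $q=m+1$, the only index from the ``first block'' contributing to $c_{m+1}$ is $p=m$, and $x_{m,m+1}$ is a genuine free parameter because both $w_+$ and $u_+$ inequalities are strict; the remaining contributions come from $\gamma_m\cap\{d+1,\ldots,3d\}$ and involve free parameters distinct from $x_{m,m+1}$. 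Hence $c_{m+1}$ is a polynomial in the free parameters whose coefficient of $x_{m,m+1}$ equals $\alpha_{m-1,m}$, generically nonzero by induction, so $c_{m+1}$ is generically nonzero. The case $q=2d+m+1$ is cleaner: the only contributing index is $p=2d+m$, yielding $c_{2d+m+1} = \alpha_{m-1,2d+m}\cdot x_{2d+m,2d+m+1}$.

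The main technical obstacle is the careful case analysis that must be done both for the support containment and for the nonvanishing. One needs to confirm in each subcase of the moment map equation that $x_{iq}$ vanishes when $q$ lies outside $\gamma_{m+1}$, and separately that no ``collateral'' contributions at $q=m+1$ or $q=2d+m+1$ share the free parameter responsible for the distinguished leading term. Once the block structure of $\omega_+$ (with its identity blocks in positions $(1,3)$ and $(3,1)$ creating the tight constraints on the top-left and bottom-right corners) is combined with the fact that $\delta_m$ systematically includes $m$ and $2d+m$, the induction runs smoothly.
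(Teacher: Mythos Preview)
Your approach is essentially the paper's: induction on $m$, base case via $v^*(e_q)=0$ for $q\in\beta_+$, inductive support containment via the moment map identity $e_i^*(xe_q)=-e_i^*(x'e_q)$ for $i\notin\beta_+$, and nonvanishing by isolating the contribution from $i=m$ (resp.\ $i=2d+m$). One point worth tightening: for $q=m+1$, the ``remaining contributions'' from $i\in\gamma_m\cap\{d+1,\ldots,3d\}$ are in fact all zero (for such $i$ one has $u_+(i)>u_+(m+1)$, hence $x'_{i,m+1}=0$, and $i\notin\beta_+$ kills the $v^*\otimes v$ term, so the moment map forces $x_{i,m+1}=0$), which gives the exact equality $c_{m+1}=\alpha_{m-1,m}\,x_{m,m+1}$ rather than merely a leading-term argument; the paper does this computation directly and fixes the open set $J=\{a_{k,k+1},c_{k,k+1},v^*(e_1),v^*(e_{2d+1})\text{ all nonzero}\}$ at the outset so that the induction runs pointwise on $J$ without appealing to ``free parameters distinct from $x_{m,m+1}$.''
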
 
\begin{proof}Since $x$ preserves the flag $F$, we have that $e_i^*(x(e_j))= 0$ if $w(i)\geq w(j)$. Analogously $e_i^*(x'(e_j))= 0$ if $u(i)\geq u(j)$. Also, we have that $\im(v^*\otimes v)\subset \left<e_{d+1},\ldots,e_{2d}\right>$. Therefore the condition that $x+x'+v^*\otimes v=0$ implies that in the basis $\{e_1,\ldots,e_{3d}\}$ the three operators have the following block matrix form:
\begin{equation}\label{blkfrm} v^*\otimes v = \left(\begin{array}{ccc} \textbf{0} & \textbf{0} & \textbf{0} \\
                         \ast & \ast & \ast \\
                         \textbf{0} & \textbf{0} & \textbf{0} \end{array}\right) \quad 
                         x = \left(\begin{array}{lll} A & \textbf{0} & \textbf{0} \\
                         \textbf{0} & B & \ast \\
                         \textbf{0} & \textbf{0} & C \end{array}\right)\quad x' = \left(\begin{array}{lll} A' & \textbf{0} & \textbf{0} \\
                         \ast & B' & \textbf{0} \\
                         \textbf{0} & \textbf{0} & C' \end{array}\right)\end{equation}                                    
where $A,B,C,A',B',C'$ are strictly upper triangular $d\times d$ matrices and the $\ast$'s are some possibly nonzero matrices depending on $\beta_+$.
They satisfy $A'=-A$ and $C'=-C$.
Now, let
\begin{equation}\label{1jblk}A=\left(\begin{array}{cccc} 0 & a_{1,2} & \ldots & a_{1,d} \\
                                & 0      & \ddots & \vdots \\
                                &        & \ddots & a_{d-1,d} \\
                              0 &        &        &   0      \end{array}\right) \quad
                              C=\left(\begin{array}{cccc} 0 & c_{1,2} & \ldots & c_{1,d} \\
                                & 0      & \ddots & \vdots \\
                                &        & \ddots & c_{d-1,d} \\
                              0 &        &        &   0      \end{array}\right). \end{equation}  
Then, since $F\in\F^{(1^d,\mu,1^d)}$, the set of conormal vectors such that $\rk A = d-1 = \rk C$ (or equivalently such that $a_{1,2},\ldots,a_{d-1,d},c_{1,2},\ldots,c_{d-1,d}$ are all nonzero) is open dense in $N^*\Omega_{\tlo_+}|_{(F,F',v)}$. 

Also, the set of conormal vectors $(x,x',v^*)$ such that $v^*(e_1),v^*(e_{2d+1})$ are both nonzero is open dense. 

Let $J$ be the intersection of these two sets, then $J\subset N^*\Omega_{\tlo_+}|_{(F,F',v)}$ is open dense. From now on we assume that $(x,x',v^*)\in J$ and we are going to prove that the conclusion of the lemma is true.

For $i\in\beta_+$, we have
$$ e_i^*(x(e_i))=e_i^*(x'(e_i))=0 $$
therefore
\begin{align*} 0 & =e_i^*(x(e_i))+e_i^*(x'(e_i)) \\
& = e_i^*((-v^*\otimes v)(e_i)) \\
& = -e_i^*\left(v^*(e_i)\sum_{k\in\beta_+}e_k\right) \\
& = -v^*(e_i).
\end{align*}
So the elements of the basis $\{e_i|i\in\beta_+\}$ are such that $v^*$ vanishes on them, hence 
$$v^*=\sum_{i\notin\beta_+}\alpha_{0,i}e_i^*=\sum_{i\in\gamma_1}\alpha_{0,i}e_i^*$$
for some coefficients $\{\alpha_{0,i}\}$ with $\alpha_{0,1}=v^*(e_1)\neq 0$ and $\alpha_{0,2d+1}=v^*(e_{2d+1})\neq 0$ because $(x,x',v^*)\in J$.

Now, inductively, let us assume that 
$$(x^*)^{m-1}v^*=\sum_{i\in\gamma_m}\alpha_{m-1,i}e_i^*\quad\text{ with }\alpha_{m-1,m}\neq 0\neq\alpha_{m-1,m+2d}$$ 
then
\begin{align*}(x^*)^mv^*(e_j)& = (x^*)^{m-1}v^*(xe_j) \\
& = \sum_{i\in\gamma_m}\alpha_{m-1,i}e_i^*(xe_j) \end{align*}
now if $i\in\gamma_m$, in particular $i\notin\beta_+$, hence
\begin{align}\label{eq3} 0 & = e_i^*(x+x'+v^*\otimes v)e_j \notag \\
& = e_i^*(x(e_j)+x'(e_j)+v^*(e_j)v) \notag \\
& = e_i^*(x(e_j))+e_i^*(x'(e_j)) \notag 
\intertext{therefore}
 e_i^*(x(e_j)) & =-e_i^*(x'(e_j)). \end{align}
The LHS of \eqref{eq3} is nonzero if and only if $w_+(i)<w_+(j)$, while the RHS is nonzero if and only if $u_+(i)<u_+(j)$.

This shows that $(x^*)^mv^*(e_j)=0$ if for all $i\in\gamma_{m-1}$, we have $u_+(i)\geq u_+(j)$ or $w_+(i)\geq w_+(j)$. This is equivalent to
$$(x^*)^mv^*=\sum_{i\in\gamma_{m+1}}\alpha_{m,i}e_i^*$$
for some $\alpha_{m,i}$. Moreover, 
\begin{align*}\alpha_{m,m+1} &= (x^*)^mv^*(e_{m+1}) \\
&= (x^*)^{m-1}v^*(x e_{m+1}) \\
&= (x^*)^{m-1}v^*\left(\sum_{j=1}^m a_{j,m+1}e_j\right) \\
&= \sum_{i\in\gamma_m}\alpha_{m-1,i}e_i^*\left(\sum_{j=1}^m a_{j,m+1}e_j\right)\\
&= \alpha_{m-1,m} a_{m,m+1}\neq 0
\end{align*}
because $j\notin\gamma_m$ for $j<m$ and $(x,x',v^*)\in J.$

Analogously,
\begin{align*}\alpha_{m,m+2d+1} &= (x^*)^mv^*(e_{m+2d+1}) \\
&= (x^*)^{m-1}v^*(x e_{m+2d+1}) \\
&= (x^*)^{m-1}v^*((-x'-v^*\otimes v) e_{m+2d+1}) \\
&= -(x^*)^{m-1}v^*\left(\sum_{j=1}^m(-c_{j,m+1})e_{j+2d}+v^*(e_{m+2d+1})v\right) \\
&= \sum_{i\in\gamma_m}\alpha_{m-1,i}e_i^*\left(\sum_{j=1}^m c_{j,m+1}e_{j+2d}\right)+0\\
&= \alpha_{m-1,m} c_{m,m+1}\neq 0
\end{align*}
because $j\notin\gamma_m$ for $2d<j<m+2d$ and $(x,x',v^*)\in J.$
\end{proof}
                                          
We let $S:=(k[x^*]v^*)^\perp\subset V_+$, that is $S$ is the annihilator of the span of $\{v^*,x^*v^*,(x^*)^2 v^*,\ldots\}\subset V_+^*$. We want to describe the relative position of the partial flags $F\cap S$ and $F'\cap S$.
\begin{defn}We define a new array $\omega'\in\MM{(\mu,1^d)}{(\mu',1^d)}$ by
$$\omega'=\left(\begin{array}{ccc} u'(d+1) & \ldots & u'(3d) \\ w'(d+1) & \ldots & w'(3d) \end{array}\right)$$
where
\begin{align*}u'(i)& =u_+(i) \\
w'(i) & =\left\{\begin{array}{lcl} w_+(i) & \text{ if } & i\in\beta_+ \\
                                   w_+(j), \text{ where }j=\max\{l\in\delta_m|l<i\} & \text{ if } & i\in\delta_m \text{ for some }m.\end{array}\right.
                                   \end{align*}
\end{defn}
Notice that this is well defined because $\beta_+\sqcup\bigsqcup\limits_m \delta_m=\{1,2,\ldots,3d\}$.
\begin{lem}\label{lem3}The relative position of the flags $F\cap S$ and $F'\cap S$ is $\omega'$. \end{lem}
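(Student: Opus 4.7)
My plan is to make the subspace $S$ explicit via Lemma~\ref{claim}, count the dimensions $\dim(F_j\cap F'_k\cap S)$ directly, and then recognise the resulting relative position matrix as $\omega'$. Writing $v=\sum c_i e_i$, Lemma~\ref{claim} shows that $S=(k[x^*]v^*)^{\perp}$ is cut out by the $d$ linear equations $C_m:\sum_{i\in\gamma_m}\alpha_{m-1,i}\,c_i=0$ for $m=1,\ldots,d$, while the coordinates $c_i$ with $i\in\beta_+$ are free; hence $\dim S=2d$. For a sufficiently generic conormal vector we may strengthen the lemma by assuming that every $\alpha_{m-1,i}$ with $i\in\gamma_m$ is nonzero and that all relevant minors of the constraint matrix have maximal rank.

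For each pair $(j,k)$ I would set $P(j,k):=\{r:w_+(r)\le j,\ u_+(r)\le k\}$, so that $F_j\cap F'_k=\seq{e_r:r\in P(j,k)}$ and $F_j\cap F'_k\cap S$ is the joint kernel of the restrictions of the $C_m$ to the coordinates indexed by $P(j,k)$. Row $m$ of this restricted system is identically zero iff $\gamma_m\cap P(j,k)=\emptyset$; the descending chain $\gamma_1\supset\gamma_2\supset\cdots$ together with generic nonvanishing keeps the surviving rows linearly independent, so
$$\dim(F_j\cap F'_k\cap S)=|P(j,k)|-\#\{m:\gamma_m\cap P(j,k)\ne\emptyset\}.$$
Taking the mixed second difference $\Delta^{2}f(j,k):=f(j,k)-f(j-1,k)-f(j,k-1)+f(j-1,k-1)$, the $(j,k)$-entry of the relative position matrix becomes
$$M_{jk}=(\omega_+)_{jk}-\sum_{m=1}^{d}\Delta^{2}\chi_m(j,k),$$
where $\chi_m$ denotes the indicator of $\{(j,k):\gamma_m\cap P(j,k)\ne\emptyset\}$.

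Because $\delta_m$ is the set of minimal elements of $\gamma_m$ under the componentwise order on $(w_+,u_+)$, it is an anti-chain, and its elements listed in integer-index order as $r_1<\cdots<r_l$ have (generically strictly) decreasing $w_+$ and increasing $u_+$. Consequently $\Delta^{2}\chi_m$ equals $+1$ at each point $(w_+(r_s),u_+(r_s))$ and $-1$ at each inner corner $(w_+(r_{s-1}),u_+(r_s))$ for $s=2,\ldots,l$, and $0$ elsewhere. The $+1$ contributions exactly cancel those of $(\omega_+)_{jk}$ arising from $\gamma_1=\bigsqcup_m\delta_m$, leaving $M_{jk}$ supported precisely at (a) the positions $(w_+(r),u_+(r))$ with $r\in\beta_+$, and (b) the inner corners $(w_+(r_{s-1}),u_+(r_s))$. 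These match the columns of $\omega'$ indexed by $i\in\{d+1,\ldots,3d\}$: case (a) accounts for $i\in\beta_+$ (where $w'(i)=w_+(i)$), and case (b) accounts for $i=r_s\in\delta_m$ with $s>1$, in which case $r_{s-1}=\max\{l\in\delta_m:l<i\}$ and so $w'(i)=w_+(r_{s-1})$. The smallest element $r_1=m$ of each $\delta_m$ satisfies $m\le d$, so it is excluded from the columns of $\omega'$, matching the absence of any corresponding contribution to $M_{jk}$.

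The hard part will be the linear-independence claim used in the dimension count: restricting to $P(j,k)$ can eliminate the distinguished pivot $c_m$ from some constraint, and one must verify that the surviving rows remain independent generically. This reduces to the non-vanishing of finitely many polynomial minors in the $\alpha_{m-1,i}$; combined with the irreducibility of $N^*\Omega_{\tlo_+}$, this should yield the statement on a suitable open dense subset.
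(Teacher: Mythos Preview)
Your overall strategy is the paper's strategy: cut $S$ out by the $d$ linear forms $(x^*)^{m-1}v^*$, compute $\dim(F_j\cap F'_k\cap S)=|P(j,k)|-k_{jk}$ where $k_{jk}=\#\{m:\gamma_m\cap P(j,k)\neq\emptyset\}$, and then identify this combinatorially with $\omega'$. The linear-independence step you single out as ``the hard part'' is exactly the point the paper handles by invoking Lemma~\ref{claim}; there too it is the nested supports $\gamma_1\supset\gamma_2\supset\cdots$ together with the nonvanishing of the distinguished coefficients that make the restricted functionals independent, and your appeal to genericity is in the same spirit (and in fact more explicit) than the paper's one-line justification.

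Where you diverge is in the combinatorial matching. The paper stays with the cumulative quantities $r_{jk}$ and proves directly that $r_{jk}(\omega')=r_{jk}(\omega_+)-k_{jk}$ by exhibiting a bijection: for each $m\le k_{jk}$ remove the least-index element $s_m$ of $\delta_m\cap R_{jk}(\omega_+)$ and observe that $R_{jk}(\omega')=R_{jk}(\omega_+)\setminus\{s_1,\dots,s_{k_{jk}}\}$. You instead pass to second differences and analyse each $\Delta^2\chi_m$ via the staircase shape determined by the antichain $\delta_m$. This is a legitimate alternative, and your identification of the outer corners with the columns dropped from $\omega'$ and of the inner corners with the shifted columns $w'(r_s)=w_+(r_{s-1})$ is exactly right when the elements of $\delta_m$ have pairwise distinct $(w_+,u_+)$. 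In the partial-flag setting, however, several indices in $\delta_m$ can share the same position (whenever an entry of $\omega$ exceeds $1$), and then your corner count as written undercounts: a single outer corner of $\chi_m$ does not cancel the full multiplicity in $(\omega_+)_{jk}$, and some of your ``inner corners'' $(w_+(r_{s-1}),u_+(r_s))$ collapse onto the same point. The bookkeeping can be repaired, but the paper's cumulative bijection sidesteps this entirely and is both shorter and multiplicity-insensitive. If you want to keep the second-difference viewpoint, it is cleanest to sum first, i.e.\ compute $\Delta^2 k_{jk}$ directly from $k_{jk}=\max\{m:\delta_m\cap P(j,k)\neq\emptyset\}$ rather than summing the individual $\Delta^2\chi_m$.
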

\begin{proof}
Remark that $S=\bigcap_{m=0}^{d-1}\ker\left((x^*)^m v^*\right)$, therefore $ F_d\cap S=0=F'_d\cap S$. This follows from Lemma \ref{claim}, since $(x^*)^mv^*(e_{m+1})$ and $(x^*)^mv^*(e_{m+2d+1})$ are both nonzero.

This implies that the types of the partial flags $F\cap S$ and $F'\cap S$ are respectively $(\mu,1^d)$ and $(\mu',1^d)$. In particular we have $(F\cap S)_{\mu_1+\ldots+\mu_i}=F_{d+\mu_1+\ldots+\mu_i}\cap S$ and analogously for $F'\cap S$.

If we let 
$$r_{ij}(\omega')=\card\{l\in\{d+1,\ldots,3d\}|w'(l)\leq i\text{ and }u'(l)\leq j\},$$ 
to prove the lemma we just need to show that
$$\dim (F_i\cap F'_j\cap S)=r_{ij}(\omega')\quad\forall i,j.$$
We define the set 
$$R_{ij}(\omega_+):=\{l\in\{1,\ldots,3d\}|w_+(l)\leq i\text{ and }u_+(l)\leq j\}$$
then if we let $r_{ij}(\omega_+)=\card R_{ij}(\omega_+)$, we have that $r_{ij}(\omega_+)=\dim(F_i\cap F'_j)$. More precisely, the vectors $\{e_l|l\in R_{ij}(\omega_+)\}$ form a basis for $F_i\cap F'_j$.

Remark that if $m'\leq m$ and $\delta_m\cap R_{ij}(\omega_+)\neq\emptyset$, then $\delta_{m'}\cap R_{ij}(\omega_+)\neq\emptyset$. Hence there exist integers $k_{ij}\geq 0$ defined by the property that $\delta_m\cap R_{ij}(\omega_+)\neq\emptyset$ if and only if $m\leq k_{ij}$. Furthermore, since $\gamma_m=\delta_m\sqcup\delta_{m+1}\sqcup\ldots\sqcup\delta_d$, we have that $\gamma_m\cap R_{ij}(\omega_+)\neq\emptyset$ if and only if $m\leq k_{ij}$. 

This implies that $(x^*)^{m-1}v^*|_{F_i\cap F'_j}\neq 0$ if and only if $m\leq k_{ij}$. Actually, by Lemma \ref{claim} these linear functionals on $F_i\cap F'_j$ are linearly independent for $m=1,\ldots, k_{ij}$. Therefore
\begin{align*}\dim(F_i\cap F'_j\cap S) & = \dim\left(\bigcap_{m=1}^{k_{ij}}\ker(x^*)^{m-1}v^*|_{F_i\cap F'_j}\right) \\
                                       & = \dim(F_i\cap F'_j)-k_{ij} \\
                                       & = r_{ij}(\omega_+)-k_{ij}
\end{align*}
To conclude the proof of the lemma now we need to show that $r_{ij}(\omega_+)-k_{ij}=r_{ij}(\omega')$.
If we now define, in analogy to $R_{ij}(\omega_+)$, $R_{ij}(\omega')$ to be the set such that $r_{ij}(\omega')=\card R_{ij}(\omega')$, we have that
\begin{align*}R_{ij}(\omega_+)& =(R_{ij}(\omega_+)\cap\beta_+)\sqcup\left(\bigsqcup_{m=1}^d R_{ij}(\omega_+)\cap\delta_m\right) \\
R_{ij}(\omega')& =(R_{ij}(\omega')\cap\beta_+)\sqcup\left(\bigsqcup_{m=1}^d R_{ij}(\omega')\cap\delta_m\right).
\end{align*}
By definition of $\omega'$, we have $R_{ij}(\omega')\cap\beta_+=R_{ij}(\omega_+)\cap\beta_+$. 

If $m>k_{ij}$, then $R_{ij}(\omega')\cap\delta_m=R_{ij}(\omega_+)=\emptyset$.

If $m\leq k_{ij}$, then $R_{ij}(\omega')\cap\delta_m=R_{ij}(\omega_+)\cap\delta_m\setminus\{s_m\}$, where $s_m$ is the minimal element of $R_{ij}(\omega_+)\cap\delta_m$.

Therefore $R_{ij}(\omega')=R_{ij}(\omega_+)\setminus\{s_1,\ldots,s_{k_{ij}}\}$ which implies
$$r_{ij}(\omega')=r_{ij}(\omega_+)-k_{ij}.$$  
\end{proof}

\begin{lem}The subspace $S$ of $V_+$ has dimension $2d$ and it is invariant under both $x$ and $x'$.
\end{lem}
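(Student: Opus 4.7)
The plan is to derive both conclusions—$\dim S = 2d$ and invariance under $x$ and $x'$—directly from the description of $k[x^*]v^* \subset V_+^*$ contained in Lemma \ref{claim}, together with the moment map relation $x+x'+v^*\otimes v = 0$.

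I would first establish $\dim k[x^*]v^* = d$, which immediately gives $\dim S = 3d - d = 2d$. Lemma \ref{claim} (and, for $m=0$, the expression for $v^*$ produced early in its proof) provides $(x^*)^m v^* = \sum_{i \in \gamma_{m+1}} \alpha_{m,i} e_i^*$ with $\alpha_{m,m+1} \ne 0$ for $0 \le m \le d-1$. Since the $\gamma_{m'}$ are nested with $m' \in \delta_{m'} = \gamma_{m'} \setminus \gamma_{m'+1}$, one has $\gamma_m \subset \{m,m+1,\ldots,3d\}$, so the vectors $v^*, x^*v^*, \ldots, (x^*)^{d-1}v^*$ have their leading coordinates at the distinct basis positions $e_1^*, e_2^*, \ldots, e_d^*$ and are thus linearly independent. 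The same inductive step (the support portion, which does not need $\alpha_{m,m+1} \ne 0$) applied once more yields $(x^*)^d v^*$ supported on $\gamma_{d+1} = \emptyset$, so it vanishes. Hence $\dim k[x^*]v^* = d$.

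The $x$-invariance of $S$ is then automatic from the adjoint identity: for $s \in S$ and any $m \ge 0$, $((x^*)^m v^*)(xs) = ((x^*)^{m+1} v^*)(s) = 0$, so $xs \in S$. For $x'$-invariance I would substitute $x' = -x - v^* \otimes v$ from the moment map equation; since $s \in S$ in particular gives $v^*(s) = 0$, the rank-one term acts trivially on $s$, so $x's = -xs \in S$ by the $x$-invariance just shown.

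The main (mild) obstacle is completing the staircase argument at $m = d$, that is verifying $(x^*)^d v^* = 0$. This amounts to one extra application of the $e_i^*(x(e_j)) = -e_i^*(x'(e_j))$ calculation already carried out in the proof of Lemma \ref{claim}, using only the support conclusion, so I do not anticipate any real difficulty.
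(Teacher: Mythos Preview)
Your proposal is correct and follows essentially the same route as the paper. The paper also computes $\dim S$ from the linear independence of $v^*, x^*v^*, \ldots, (x^*)^{d-1}v^*$ supplied by Lemma \ref{claim}, and obtains invariance under $x$ and $x'$ from the adjoint identity together with the moment-map relation $x+x'+v^*\otimes v=0$ exactly as you do; your explicit verification that $(x^*)^d v^*=0$ (so that $S$ really coincides with the intersection of the first $d$ kernels) is a point the paper leaves implicit.
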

\begin{proof}
The dimension claim just follows from the fact that $S=\bigcap_{m=0}^{d-1}\ker\left((x^*)^m v^*\right)$ and that those functionals are linearly independent by Lemma \ref{claim}. Therefore $\dim S=3d-d=2d$. If $z\in S$, then $(x^*)^mv^*(z)=0$ for all $m$, therefore 
$$(x^*)^mv^*(xz)=(x^*)^{m+1}v^*(z)=0,\quad\text{ for all }m$$
and $xz\in S$.
In a similar way, for all $m$,
\begin{align*}(x^*)^mv^*(x'z) &=(x^*)^mv^*((-x-v^*\otimes v)z)\\
&=-(x^*)^{m+1}v^*(z)-v^*(z)(x^*)^mv^*(v) \\
&= 0. 
\end{align*} 
\end{proof}

\begin{defn}\label{bart}Let $\bar{x}=x|_S=-x'|_S$. We then have a map
$$ g:Y_{\tlo_+}\to \F_{\bar{x}}\times\F_{\bar{x}}$$
$$(F,F',v,x,x',v^*)\mapsto (F\cap S,F'\cap S).$$
Since $Y_{\tlo_+}$ is irreducible, $\im g$ lies in an irreducible component of $\F_{\bar{x}}\times\F_{\bar{x}}$. 

So there exist two semistandard tableaux $\bar{T}$, $\bar{T}'$ such that, for all $y\in Y_{\tlo_+}$, $(F\cap S,F'\cap S)\in C_{\bar{x},\bar{T}}\times C_{\bar{x},\bar{T}'}$. 
\end{defn}
In particular, by what we remarked at the beginning of the proof of Lemma \ref{lem3}, we have that $\bar{T}$ has content $(\mu,1^d)$ and $\bar{T}'$ has content$(\mu',1^d)$.

\begin{lem}\label{choose}The map
$$g:Y_{\tlo_+}\to C_{\bar{x},\bar{T}}\times C_{\bar{x},\bar{T}'}$$
is surjective.
\end{lem}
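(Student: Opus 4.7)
The plan is to construct an explicit preimage: for any $(G,G') \in C_{\bar{x},\bar{T}} \times C_{\bar{x},\bar{T}'}$, I produce a point $y = (F,F',v,x,x',v^*) \in Y_{\tlo_+}$ with $g(y) = (G,G')$. Since both source and target are $\gl(V_+)$-equivariant and $g$ is equivariant for the stabilizer of the pair $(S,\bar{x})$, I fix once and for all a base point $y_0 \in N^*\Omega_{\tlo_+}$ together with its distinguished basis from \eqref{bas}, thereby fixing the $2d$-dimensional subspace $S_0 := S(y_0)$ and the nilpotent $\bar{x}_0 := x_0|_{S_0}$. It then suffices to realize every $(G,G') \in C_{\bar{x}_0,\bar{T}} \times C_{\bar{x}_0,\bar{T}'}$ as $g(y)$ for some $y$ lying over $(S_0,\bar{x}_0)$.

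The first step is to rebuild the ambient nilpotent data. In the basis of \eqref{bas} I decompose $V_+ = W_1 \oplus S_0 \oplus W_2$ with $W_1 = \seq{e_1,\ldots,e_d}$ and $W_2 = \seq{e_{2d+1},\ldots,e_{3d}}$, and extend $\bar{x}_0$ to an endomorphism $x$ of $V_+$ of Jordan type $\lambda$. I then choose $v^* \in V_+^*$ vanishing on $S_0$ such that $v^*, x^*v^*, \ldots, (x^*)^{d-1}v^*$ are linearly independent with common kernel $S_0$; the openness argument from the proof of Lemma \ref{claim} guarantees such a choice. Finally I pick $v \in V_+$ generically and set $x' := -x - v^* \otimes v$. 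Since $v^*|_{S_0} = 0$, we have $x'|_{S_0} = -\bar{x}_0$, and a generic $v$ makes the Jordan type of $x'$ equal to $\lambda'$.

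The main combinatorial step is to extend $G$ and $G'$ to flags $F, F'$ on $V_+$, of types $(1^d,\mu,1^d)$ and $(1^d,\mu',1^d)$ respectively, preserved by $x$ and $x'$, with $(F,F',v) \in \Omega_{\tlo_+}$. The block form of $\omega_+$ (zero blocks off the antidiagonal, identity in the two corners, $\omega$ in the middle) forces the first $d$ steps of $F$ to form a chain in $W_2$, the middle range to satisfy $F_{d+\mu_1+\cdots+\mu_i} = W_2 \oplus G_i$, and the last $d$ steps to complete through $W_1$; a dual prescription holds for $F'$. These conditions leave exactly the freedom needed to pick $x$-invariant chains in $W_1,W_2$ and $x'$-invariant chains in $W_2,W_1$ respectively, and $v \in W_1$ is then chosen to realize the decoration $\beta_+$. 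Applying Lemma \ref{lem3} to the constructed tuple yields $F \cap S_0 = G$ and $F' \cap S_0 = G'$, so $g(y) = (G,G')$ as desired.

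The main obstacle is this third step: simultaneously forcing the flag extensions into the correct orbit $\Omega_{\tlo_+}$ while prescribing their intersections with $S_0$. This requires carefully matching the block decomposition of $\omega_+$ against the $x$- and $x'$-invariant structures of $W_1$ and $W_2$, and is precisely where the specific form of $\tlo_+$ (constructed from $\tlo$ with identity corner blocks) pays off by pinning down the flag geometry outside $S_0$.
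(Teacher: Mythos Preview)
Your proposal contains a concrete error and misses the key idea of the paper's argument.

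First, the decomposition $V_+ = W_1 \oplus S_0 \oplus W_2$ with $W_1 = \seq{e_1,\ldots,e_d}$ and $W_2 = \seq{e_{2d+1},\ldots,e_{3d}}$ is impossible: $\dim W_1 + \dim S_0 + \dim W_2 = d + 2d + d = 4d$, while $\dim V_+ = 3d$. You have confused $S_0 = (k[x_0^*]v_0^*)^\perp$ with the middle coordinate block $V=\seq{e_{d+1},\ldots,e_{2d}}$. By Lemma~\ref{claim} one has $W_1\cap S_0 = W_2\cap S_0 = 0$, but $(W_1+W_2)\cap S_0$ is $d$-dimensional, so $S_0$ is genuinely transverse to the coordinate splitting. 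This confusion propagates through the rest of your construction.

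Second, and more fundamentally, your strategy of rebuilding $(x,x',v,v^*)$ from scratch is both unnecessary and leaves essential claims unverified: why should a ``generic $v$'' force $x'=-x-v^*\otimes v$ to have Jordan type $\lambda'$, why does your $x'$ preserve the flag $F'$ you build, and above all why does the triple $(F,F',v)$ lie in the specific orbit $\Omega_{\tlo_+}$? You assert the last point but never check the relative position or the decoration. The paper avoids all of this. It fixes a base point $y=(F,F',v,x,x',v^*)\in Y_{\tlo_+}$, keeps $(x,x',v,v^*)$ and the bottom pieces $F_d,F'_d$ unchanged, and simply sets $\hat F_i=\bar F_{i-d}+F_d$ for $i>d$ (similarly for $F'$). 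One checks directly that $\hat F\cap S=\bar F$, $\hat F'\cap S=\bar F'$, and that $x,x'$ preserve $\hat F,\hat F'$, so $\hat y=(\hat F,\hat F',v,x,x',v^*)$ lies in the ambient union $Y^{(1^d,\mu,1^d),(1^d,\mu',1^d)}$. The paper does \emph{not} attempt to verify that $\hat y$ lies in $\Omega_{\tlo_+}$; instead it observes that $(\bar F,\bar F')\mapsto\hat y$ is a morphism from the irreducible variety $C_{\bar x,\bar T}\times C_{\bar x,\bar T'}$, hence its image sits in a single irreducible component, and since the original $y$ is in the image, that component is $Y_{\tlo_+}$. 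This irreducibility trick is the idea you are missing.
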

\begin{proof}Let $y=(F,F',v,x,x',v^*)\in Y_{\tlo_+}$, so that $g(y)=(F\cap S,F'\cap S)$. Given $(\bar{F},\bar{F}')\in C_{\bar{x},\bar{T}}\times C_{\bar{x},\bar{T}'}$, define two flags $\hat{F}$, $\hat{F}'$ in $V_+$ as follows
$$\hat{F}_i =\left\{\begin{array}{lcl} F_i & \text{ if } & i\leq d \\
                                      \bar{F}_{i-d}+F_d & \text{ if } & i>d \end{array}\right. $$
and $\hat{F}'$ is defined in the same way, replacing $F'$ and $\bar{F}'$ where necessary. 

Clearly, $x$ preserves the flag $\hat{F}$ and the same is true for $x'$ and $\hat{F}'$. We can then consider the point $\hat{y}=(\hat{F},\hat{F}',v,x,x',v^*)\in Y^{(1^d,\mu,1^d),(1^d,\mu',1^d)}$.
By construction, $\hat{y}$ is such that $\hat{F}\cap S=\bar{F}$ and $\hat{F}'\cap S=\bar{F}'$. 
Consider the maps
$$(\bar{F},\bar{F}')\mapsto (\hat{F},\hat{F}')\mapsto \hat{y}.$$
Let $f$ be the composition of those, then 
$$f:C_{\bar{x},\bar{T}}\times C_{\bar{x},\bar{T}'}\to Y^{(1^d,\mu,1^d),(1^d,\mu',1^d)}.$$
Since $C_{\bar{x},\bar{T}}\times C_{\bar{x},\bar{T}'}$ is irreducible, the image of $f$ lies in an irreducible component of $Y^{(1^d,\mu,1^d),(1^d,\mu',1^d)}$. Notice that $f(F\cap S,F'\cap S)=y\in Y_{\tlo_+}$, hence $\im f\subset Y_{\tlo_+}$. 
Therefore $\hat{y}\in Y_{\tlo_+}$ and $g(\hat{y})=(\bar{F},\bar{F}')$, thus the lemma is proved.
\end{proof}
\begin{rem}For $(F,F',v,x,x',v^*)$ in an open dense subset of $Y_{\tlo_+}$, we know by Lemma \ref{lem3} that $\omega'$ is the relative position of the partial flags $F\cap S$ and $F'\cap S$. Also, by Lemma \ref{choose}, the preimage of the open dense subset of $C_{\bar{x},\bar{T}}\times C_{\bar{x},\bar{T}'}$ for which Theorem \ref{rosso} applies, contains an open dense subset of $Y_{\tlo_+}$. Therefore we have 
$$(\bar{T},\bar{T}')\stackrel{RSK}\longleftrightarrow\omega'.$$ 
\end{rem}
Now, consider the spaces $F_{d+n}$, $F'_{d+n'}$ where $n$, $n'$ are the number of parts of $\mu$ and $\mu'$ respectively, i.e. $\mu=(\mu_1,\ldots,\mu_n)$, $\mu'=(\mu'_1,\ldots,\mu'_{n'})$. By \eqref{blkfrm}, they are invariant under both operators $x$ and $x'$, therefore the same is true for $V:=F_{d+n}\cap F'_{d+n'}$. Notice that in the basis of \eqref{bas}, we have that $V=\seq{e_{d+1},\ldots,e_{2d}}$.

Consider the flags $F\cap V$ and $F'\cap V$. It is clear that the relative position $M(F\cap V,F'\cap V)=\omega$, and that $\bar{y}=(F\cap V, F'\cap V, v, x|_V,x'|_V,v^*|_V)\in Y_{\tlo}$. 

Applying the mirabolic RSK correspondence of Definition \ref{mirrsk} to $Y_{\tlo}$ we get 
\begin{equation}\label{5tuple}(\textbf{t}(\tlo),T(\tlo),T'(\tlo)),\quad\text{ with }\quad\textbf{t}(\tlo)=(\lambda(\tlo),\theta(\tlo),\lambda'(\tlo)).\end{equation} 

Thus we have $F\cap V\in C_{x|_V,T(\tlo)}$, $F'\cap V\in C_{x'|_V,T'(\tlo)}$ and $\theta(\tlo)$ is the Jordan type of $x|_{V/k[x]v}$.
\begin{lem}\label{rmv}The semistandard tableau $T(\tlo)$ (resp. $T'(\tlo)$) is obtained from the tableau $\bar{T}$ (resp. $\bar{T}'$) of Definition \ref{bart} by removing all boxes with numbers $n+1,\ldots,n+d$ (resp. $n'+1,\ldots,n'+d$).\end{lem}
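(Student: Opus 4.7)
The plan is to show that for each $j=1,\ldots,n$ the Jordan type of $\bar{x}|_{(F\cap S)_j}$ equals the Jordan type of $x|_V|_{(F\cap V)_j}$. Granting this, applying Definition \ref{eqdeft} to the type-$(\mu,1^d)$ flag $F\cap S$ and to the type-$\mu$ flag $F\cap V$ produces the same increasing sequence of Young diagrams for steps $1,\ldots,n$, so the same boxes are added and labeled $1,\ldots,n$ in both constructions. Hence the sub-tableau of $\bar{T}$ obtained by deleting its (single) boxes labeled $n+1,\ldots,n+d$ is exactly $T(\tlo)$; the analogous statement for the primed tableaux follows by repeating the whole argument with $F'_d$, $x'$, $\mu'$ in place of $F_d$, $x$, $\mu$.

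The identification of the two restricted operators is effected by passing to the quotient $V_+/F_d$. The proof of Lemma \ref{lem3} already records $F_d\cap S=0$, and from the basis description we have $F_d=\seq{e_{2d+1},\ldots,e_{3d}}$ and $V=\seq{e_{d+1},\ldots,e_{2d}}$, so $F_d\cap V=0$ as well. Since $x(F_d)\subset F_{d-1}\subset F_d$, $x$ descends to $\tilde{x}\colon V_+/F_d\to V_+/F_d$, and the projection $\pi\colon V_+\to V_+/F_d$ restricts to injections on $S$ and $V$ that intertwine $\bar{x}$ and $x|_V$ with $\tilde{x}$, respectively. The basis description also yields the decomposition $F_{d+\mu_1+\ldots+\mu_j}=F_d\oplus (F\cap V)_j$, so $\pi(F_{d+\mu_1+\ldots+\mu_j})$ is canonically identified with $(F\cap V)_j$. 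Both $(F\cap S)_j=F_{d+\mu_1+\ldots+\mu_j}\cap S$ and $(F\cap V)_j$ embed into this image, and since both have dimension $\mu_1+\ldots+\mu_j$ (using that $F\cap S$ has type $(\mu,1^d)$ by Lemma \ref{lem3}), their images in $V_+/F_d$ coincide.

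Combining the two steps, $\bar{x}|_{(F\cap S)_j}$ and $x|_V|_{(F\cap V)_j}$ are both conjugate via $\pi$ to $\tilde{x}$ restricted to the common subspace $\pi((F\cap S)_j)=\pi((F\cap V)_j)\subset V_+/F_d$, so they share the same Jordan type for every $j=1,\ldots,n$. For the primed statement one runs the same argument after noting $F'_d\cap S=0$ (also from the proof of Lemma \ref{lem3}) and $F'_d=\seq{e_1,\ldots,e_d}$, so $F'_d\cap V=0$, and recalling that $\bar{x}=-x'|_S$ has the same Jordan types as $x'|_S$ on any invariant subspace, so $\bar{T}'$ can equivalently be read off from the $x'|_S$-flag $F'\cap S$. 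The main obstacle is really bookkeeping --- setting up the quotient $V_+/F_d$ so that $(F\cap S)_j$ and $(F\cap V)_j$ become literally the same subspace --- after which the coincidence of Jordan types, and hence of the tableaux built so far, is automatic.
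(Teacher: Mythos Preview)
Your core geometric argument coincides with the paper's: both identify $(F\cap S)_j$ and $(F\cap V)_j$ with the same subspace of a quotient by $F_d$ (the paper uses $F_{d+n}/F_d$, you use $V_+/F_d$, which makes no difference), and deduce that the two flags carry the same restricted nilpotent. Your basis computations and dimension counts are correct.

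There is, however, a genuine gap. You phrase the conclusion via the map $t$ of Definition~\ref{eqdeft}: you compute $t(F\cap S)$ and $t(F\cap V)$ for a general $y\in Y_{\tlo_+}$ and equate their first $n$ steps. Identifying $t(F\cap S)$ with $\bar T$ is fine, since $\bar T$ is by construction the generic value of $t$ on the image of $g$. But identifying $t(F\cap V)$ with $T(\tlo)$ is not: $T(\tlo)$ is defined as the generic value of $t$ on $Y_{\tlo}$, and you have not shown that the restriction map $y\mapsto\bar y$ from $Y_{\tlo_+}$ to $Y_{\tlo}$ is dominant. All you know a priori is that $F\cap V\in C_{x|_V,T(\tlo)}$, i.e.\ lies in the \emph{closure}; this does not force $t(F\cap V)=T(\tlo)$, since irreducible components of Springer fibers can intersect.

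The paper avoids this issue by arguing at the level of closed irreducible components rather than the open strata $\F_{x,T}$: it records that $(F\cap F_{d+n})/F_d$ always lies in $C_{\,\cdot\,,T(\tlo)}$, and then invokes the surjectivity of $g$ (Lemma~\ref{choose}) to see that as $y$ varies these points fill up all of $C_{\,\cdot\,,T^{(n)}}$, whence the two components coincide. Your argument becomes complete if you either insert this use of Lemma~\ref{choose}, or separately prove that $y\mapsto\bar y$ is dominant onto $Y_{\tlo}$.
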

\begin{proof}By symmetry, it is enough to prove the case of $T(\tlo)$. The tableau $\bar{T}$ is defined by the condition that $F\cap S\in C_{x|_V,\bar{T}}$. If we let $T^{(n)}$ be the tableau obtained by removing from $\bar{T}$ all numbers greater than $n$, we have
$$F\cap S\cap F_{d+n}\in C_{x|_{S\cap F_{d+n}},T^{(n)}}.$$

By the remark at the beginning of the proof of Lemma \ref{lem3} and by the the definition of $V$, the spaces $V$ and $S\cap F_{d+n}$ are both complementary to $F_d$ inside $F_{d+n}$. So they can both be identified with the image of the map \begin{equation}\label{ident}F_{d+n}\twoheadrightarrow F_{d+n}/F_d.\end{equation}
Notice that under this map
\begin{align*}F\cap V & \mapsto (F\cap F_{d+n})/F_d \\
F\cap S\cap F_{d+n} & \mapsto (F\cap F_{d+n})/F_d \end{align*}
and both operators $x|_V$ and $x|_{S\cap F_{d+n}}$ get identified via \eqref{ident} with $x|_{F_{d+n}/F_d}$.
Therefore it follows that
\begin{align*}(F\cap F_{d+n})/F_d & \in C_{x|_{F_{d+n}/F_d},T(\tlo)} \\
(F\cap F_{d+n})/F_d & \in C_{x|_{F_{d+n}/F_d},T^{(n)}}. \end{align*}
By Lemma \ref{choose} the set of all $F\cap F_{d+n}$, for varying $y\in Y_{\tlo_+}$, covers all points in these irreducible components. Therefore they must be equal, i.e. 
$$ C_{x|_{F_{d+n}/F_d},T(\tlo)}=C_{x|_{F_{d+n}/F_d},T^{(n)}}$$
which implies that $T(\tlo)=T^{(n)}$.
\end{proof}

Suppose that applying the algorithm \ref{algo} to $\tlo$ we obtain $(\lambda^c,\theta^c,(\lambda')^c,T^c,(T')^c)$. We want to show that this coincides with the quintuple $(\lambda(\tlo),\theta(\tlo),\lambda'(\tlo),T(\tlo),T'(\tlo))$ of \eqref{5tuple}. 

Remark that $i+d\in\delta_m$ if and only if at the $i$-th step of the algorithm, the number $w(i)$ is being inserted in the $m$-th position of the row $R$. In this case, $w'(i)$ is the number bumped from $R$ and inserted in $T_i$. 

Therefore, if we apply the RSK correspondence from Section \ref{varrsk} to $\omega'$ we get a pair of tableaux $(T(\omega'),T'(\omega'))$ that satisfy the following:
\begin{itemize}
\item the tableau $T_{2d}$ from the algorithm is the same as $T(\omega')$; 
\item the tableau $T'_d$ is obtained from $T'(\omega')$ by removing all numbers strictly greater than $n'$.
\end{itemize}  
We also know that $\omega'\stackrel{RSK}\longleftrightarrow (\bar{T},\bar{T}')$, therefore $\bar{T}=T(\omega')$ and $\bar{T}'=T'(\omega')$. 

By Lemma \ref{rmv}, this implies that both $T^c$ and $T(\tlo)$ are obtained from $\bar{T}=T(\omega')=T_{2d}$ by removing the last $d$ numbers, so $T^c=T(\tlo)$. 

Again by Lemma \ref{rmv}, $(T')^c=T'_d$ and $T'(\tlo)$ are both obtained from $\bar{T}'=T'(\omega')$ by removing all numbers greater than $n'$, so $(T')^c=T'(\tlo)$. 

It also follows immediately that $\lambda^c=\lambda(\tlo)$ and $(\lambda')^c=\lambda'(\tlo)$ since those are respectively the shape of $T(\tlo)$ and of $T'(\tlo)$.

The only thing left to prove is that $\theta^c=\theta(\tlo)$, which will follow from the next Lemma.
\begin{lem}If we let $\nu$ be the shape of the tableau $\bar{T}$, then $\theta:=\theta(\tlo)$ is obtained from $\nu$ by removing the first part of the partition.
That is $\theta=(\theta_1,\theta_2,\ldots)=(\nu_2,\nu_3,\ldots)$. \end{lem}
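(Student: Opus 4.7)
The asserted identity $\theta=(\nu_2,\nu_3,\ldots)$ is equivalent, given $|\nu|=\dim S=2d$ and $|\theta|=\dim(V/k[x]v)=d-\dim k[x]v$, to the combined statement $\nu_1=d+\dim k[x]v$ together with $\nu_i=\theta_{i-1}$ for $i\geq 2$. My plan is to determine $\nu$ by computing $\dim\ker\bar{x}^k$ directly from the description $S=\bigcap_{m\geq 0}\ker((x^*)^m v^*)$, and then to identify the resulting partition with $(d+\dim k[x]v,\theta_1,\theta_2,\ldots)$ by analyzing the Jordan decomposition of $x$ on $V_+$.

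For the dimension count, note that $(x^*)^m v^*$ vanishes on $\ker x^k|_{V_+}$ whenever $m\geq k$, so
\[
\dim\ker\bar{x}^k=\dim\ker x^k|_{V_+}-\dim\mathrm{span}\bigl\{(x^*)^m v^*|_{\ker x^k|_{V_+}}:0\leq m<\min(k,d)\bigr\}.
\]
The restricted functionals are linearly independent: by Lemma \ref{claim}, $\alpha_{m,m+1}\neq 0$ and $e_{m+1}\in\ker x^{m+1}\subset\ker x^k$ for $m\leq k-1$, whereas $\alpha_{m',m+1}=0$ for $m'\geq m+1$ (since $m+1\in\delta_{m+1}$ forces $m+1\notin\gamma_{m'+1}$). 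Evaluating on $e_1,\ldots,e_k\in F_d$ therefore gives an upper-triangular matrix with nonzero diagonal. Hence $\dim\ker\bar{x}^k=\dim\ker x^k|_{V_+}-\min(k,d)$, which translates into $\nu^t_i=\rho^t_i-1$ for $i\leq d$ and $\nu^t_i=\rho^t_i$ for $i>d$, where $\rho$ is the Jordan type of $x|_{V_+}$. Equivalently, $\nu$ is obtained from $\rho$ by removing one Jordan block of size exactly $d$.

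The remaining step is to identify $\rho$ as the multiset $\{\,d+\dim k[x]v,\ \theta_1,\ \theta_2,\ \ldots,\ d\,\}$. From the block form \eqref{blkfrm} of $x$ and the genericity imposed by working inside an open dense subset of $Y_{\tlo_+}$: the $A$-block gives a Jordan chain of length $d$ inside $F_d$; the $C$-block together with the $(B,C)$-coupling $*$ extends the Jordan chain from the top slice $V_+/F_{d+n}$ through the $*$-block into $V$, where it continues along the cyclic submodule $k[x]v$, producing a single Jordan chain of length exactly $d+\dim k[x]v$; the complementary $x$-invariant part of $V$, which is naturally $k[x]$-isomorphic to $V/k[x]v$, contributes Jordan blocks of sizes $\theta_1,\theta_2,\ldots$. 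Subtracting one block of size $d$ (coming from $F_d$) then gives $\nu=(d+\dim k[x]v,\theta_1,\theta_2,\ldots)$, as required.

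The main obstacle is the last step: rigorously establishing that the $*$-block generically couples the Jordan chain from $V_+/F_{d+n}$ precisely onto $k[x]v$ (and not to some transverse part of $V$), so that the combined chain has length exactly $d+\dim k[x]v$ and the complementary blocks realize $\theta$. This will hinge on the genericity of $v^*$ in the conormal fiber, the specific form $v=\sum_{i\in\beta_+}e_i$, and the quantitative relations between the coefficients $\alpha_{m,i}$ and the matrix entries of $A,B,C$ derived in the proof of Lemma \ref{claim}.
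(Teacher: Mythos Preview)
Your strategy is quite different from the paper's. The paper never computes the Jordan type $\rho$ of $x$ on all of $V_+$; instead it works entirely inside $S$. It introduces the subspace $D=(F_d+F'_d)\cap S+k[x]v\subset S$, exhibits an explicit cyclic vector $z+z'\in D$ (built from a generic $z\in F_d$ and $z'\in F'_d$ chosen so that $z+z'\in S$), and shows directly that $x|_D$ is a single Jordan block. It then checks the $k[x]$-module isomorphism $V/k[x]v\simeq S/D$, from which $\theta=\nu\setminus\{\nu_1\}$ follows at once. Your route via $\rho$ and the dimension formula $\dim\ker\bar{x}^k=\dim\ker x^k-\min(k,d)$ is a legitimate alternative, and arguably more systematic, but it trades the paper's single cyclic-vector computation for an extension analysis of the $2d$-dimensional block of $x$.

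There is a small slip in your first step: the vectors $e_1,\ldots,e_d$ span $F'_d$, not $F_d$, and the $*$-block in $x$ (which is $-(v^*\otimes v)$ restricted to that block) prevents $e_{m+1}$ from lying in $\ker x^{m+1}$. Use $e_{2d+1},\ldots,e_{2d+k}\in F_d$ instead: that block is genuinely $x$-invariant with strictly upper-triangular action, so $e_{2d+j}\in\ker x^j$, and the nonvanishing of $\alpha_{m,2d+m+1}$ from Lemma~\ref{claim} gives the triangular evaluation matrix you want.

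Your acknowledged ``main obstacle'' is real but not hard to resolve, and the key observation is exactly what you suspect. Since the $*$-block of $x$ equals $-(v^*\otimes v)$ on the relevant indices, its image is contained in $kv\subset k[x]v$; hence if $\tilde e$ is a cyclic generator of the length-$d$ quotient block, then $x^d\tilde e\in k[x]v$ automatically (no transverse part can appear). Writing $x^d\tilde e=-\sum_{j=0}^{d-1}v^*(A^j\tilde e)\,x^{d-1-j}v$, the coefficient of $v$ is $-v^*(A^{d-1}\tilde e)=-a_{1,2}\cdots a_{d-1,d}\,\alpha_{0,1}$, nonzero on the open set $J$ of Lemma~\ref{claim}. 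Thus $x^d\tilde e$ generates $k[x]v$, the combined cyclic module has length $d+\dim k[x]v$, and the complementary summand is isomorphic to $V/k[x]v$, giving $\nu=(d+\dim k[x]v,\theta_1,\theta_2,\ldots)$ as you claimed.
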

\begin{proof}[Proof of Lemma]
From Definition \ref{bart}, the shape of $\bar{T}$ is the Jordan type of $\bar{x}=x|_S$. On the other hand, $\theta$ is the type of $x|_{V/k[x]v}$.

Consider the space $D:=(F_d+F'_d)\cap S+k[x]v$. Since $v\in S$ by Lemma \ref{claim} and since $S$ is $x$-invariant, we have $k[x]v\subset S$. Therefore $D=(F_d+F'_d+k[x]v)\cap S$. Now, $F_d$ is $x$-invariant by \eqref{blkfrm}, and if $z\in F'_d$, then $x(z)=-x'(z)-v^*(z)v\in F'_d+k[x]v$. So $F'_d+k[x]v$ is also $x$-invariant. It follows that $D$ is invariant under $x$. 
\begin{claim}In Jordan normal form, the nilpotent operator $x|_D$ has a single block.\end{claim}
\begin{proof}[Proof of Claim]We can assume that the matrices $A$ and $C$ of \eqref{1jblk} have rank $n-1$. 
Therefore $x|_{F_d}$, which is represented by the matrix $A$, has a single Jordan block. In the same way, $-C$ represents $x'|_{F'_d}$ which also has a single Jordan block.

Now, given the basis $\{e_i|i=1,\ldots,3d\}$ of $V_+$ defined in \eqref{bas}, we have that 
$$F_d+F'_d=\seq{e_1,\ldots,e_d,e_{2d+1},\ldots,e_{3d}}$$
hence, by Lemma \ref{claim} for $m=0,\ldots,d-1$, the linear functionals
$$(x^*)^m v^*=\sum\limits_{i\in\gamma_{m+1}}\alpha_{m,i}e_i^*$$
are linearly independent on $F_d+F'_d$. It follows that 
\begin{align*}\dim (F_d+F'_d)\cap S & =2d-d \\
                                    &  =d \\
              \dim D & = \dim (F_d+F'_d)\cap S +k[x]v \\     
               & = d+\dim k[x]v \end{align*}
since $(F_d+F'_d)\cap S\cap k[x]v=0$. (This is because $k[x]v\subset\seq{e_{d+1},\ldots,e_{2d}}$ by \eqref{blkfrm}).

Now, let $z=\sum_{i=1}^d z_ie_i\in F_d$, with $z_d\neq 0$, and let $z'=\sum_{i=2d+1}^{3d}z_ie_i\in F'_d$, with $z_{3d}\neq 0$.
We want to show that we can choose the $z_i's$ in such a way that $z+z'\in S$.
Consider the equation
\begin{align*}0 &= (x^*)^{d-1}v^*(z+z') \\
                &= \left(\sum_{i\in\gamma_d}\alpha_{d-1,i}e_i^*\right)\left(\sum_{\substack{j=1,\ldots,d \\ 2d+1,\ldots,3d}}z_je_j\right)\\
                &= \alpha_{d-1,d}z_d+\alpha_{d-1,3d}z_{3d}
                \end{align*}
Since, by Lemma \ref{claim}, $\alpha_{d-1,d}$ and $\alpha_{d-1,3d}$ are both nonzero, we can find nonzero $z_d,z_{3d}$ such that the equation holds.
We find
\begin{align*}0 &= (x^*)^{d-2}v^*(z+z') \\
                &= \left(\sum_{i\in\gamma_{d-1}}\alpha_{d-2,i}e_i^*\right)\left(\sum_{\substack{j=1,\ldots,d \\ 2d+1,\ldots,3d}}z_je_j\right)\\
                &= \alpha_{d-2,d-1}z_{d-1}+\alpha_{d-2,d}z_d+\alpha_{d-2,3d-1}z_{3d-1}+\alpha_{d-2,3d}z_{3d}.
                \end{align*}
Since $\alpha_{d-2,d-1}$ and $\alpha_{d-2,3d-1}$ are both nonzero, we can choose $z_{d-1},z_{3d-1}$ so that the equation holds.
Iterating this procedure, we find $z,z'$ such that $(x^*)^mv^*(z+z')=0$ for all $0\leq m\leq d-1$, hence $z+z'\in (F_d+F'_d)\cap S$.

Remark that, since $z_d\neq 0$, and since $x$ acts as the matrix $A$ from \eqref{1jblk} on $F_d$, $F_d=\seq{z,\ldots,x^{d-1}z}$.
In the same way, 
we have $F'_d=\seq{z',\ldots,(x')^{d-1}z'}$.               

We are now going to prove that $z+z'$ is a cyclic vector for $x$ on $D$.

We have
\begin{align}\label{comp} x(z+z') & = x(z)+x(z') \notag \\
                       & = x(z)-x'(z')-v^*(z')v \notag \\
             x^2(z+z') & = x ( x(z)-x'(z')-v^*(z')v) \notag \\
                       & = x^2(z) -v^*(z')x(v)+(-x'-v^*\otimes v)(-x'(z')) \\
                       & = x^2(z)+(x')^2(z')+v^*(x'z')v-v^*(z')x(v) \notag \\
               \ldots  & \ldots \notag \\
             x^d(z+z') & = x^d(z)+(x')^d(z')+(-1)^d v^*((x')^{d-1}z')v+\ldots -v^*(z')x^{d-1}v. \notag         
                       \end{align}

Remark that, since $z_d\neq 0$, and since $x$ acts as the matrix $A$ from \eqref{1jblk} on $F_d$, $F_d=\seq{z,\ldots,x^{d-1}z}$.
In the same way, we have $F'_d=\seq{z',\ldots,(x')^{d-1}z'}$.               
Also, notice that 
\begin{align*}v^*((x')^{d-1}z')&= v^*((-1)^{d-1} c_{1,2}\cdots c_{d-1,d}e_{2d+1}) \\
                               &= (-1)^{d-1} c_{1,2}\cdots c_{d-1,d}\alpha_{0,2d+1}\neq 0
                               \end{align*}
Since $x^d(z)=(x')^d(z')=0$, we have $x^d(z+z')\in k[x]v$ and has a nonzero coefficient in $v$.
Therefore, it follows from the computation \eqref{comp} that $x^m(z+z')$ are linearly independent for $m=0,\ldots,d$. 

Moreover, the elements 
$$x^m(z+z')\quad\text{ with }\quad d\leq m\leq d+(\dim k[x]v-1),$$  
span $k[x]v$. 
In conclusion, the set $\{x^m(z+z')|m=0,\ldots, d+(\dim k[x]v-1)\}$ spans $D=(F_d+F'_d)\cap S+k[x]v$. This means that $z+z'$ is a cyclic vector, hence $x|_D$ has a single block in Jordan normal form.                    
\end{proof}

Now, the identification of \eqref{ident} gives us an isomorphism of $x$-modules 
$$\alpha:V\stackrel{\simeq}\to S\cap F_{d+n}.$$
Remark that $D\cap V=k[x]v$, and that $D+(S\cap F_{d+n})=S$. Also
\begin{align*}D\cap\alpha(V) &= D\cap(F_{d+n}\cap S) \\
&=(F_d+F'_d+k[x]v)\cap F_{d+n}\cap S \\
&=(F_d+k[x]v)\cap S \\
&=F_d\cap S +k[x]v \\
&=k[x]v
\end{align*} 
We have then isomorphisms of $x$-modules
\begin{align*} V/k[x]v  & = V/(V\cap D)  \\
                        & \simeq (D+V)/D \\
                        & \simeq (D+\alpha(V))/D \\
                        & = S/D.
                        \end{align*}  
Hence, $x|_{V/k[x]v}=x|_{S/D}$. So $\theta$ is also the Jordan type of $x|_{S/D}$. 

We know that $\dim D\geq d$, $\dim S=2d$ and $x|_D$ is a single Jordan block. It follows that $\theta$, which is the Jordan type of $x|_{S/D}$, is obtained from the one of $x|_S$ by removing the maximal part of the partition. This concludes the proof of the Lemma and consequently of the Theorem.
\end{proof}

\appendix
\section{Example of the mirabolic RSK correspondence}\label{append}
Let $V\simeq k^7$, and let a basis of $V$ be $\{u_1,u_2,\ldots,u_7\}$. We consider the nilpotents $x$, $x'$, expressed as matrices in the basis $\{u_i\}$.
$$ x=\begin{pmatrix} 0 & 0 & 0 & 0 & 0 & 0 & 0 \\ 
0 & 0 & 1 & 0 & 0 & 0 & 0 \\
0 & 0 & 0 & 1 & 0 & 0 & 0 \\
0 & 0 & 0 & 0 & 1 & 0 & 0 \\
0 & 0 & 0 & 0 & 0 & 0 & 0 \\
0 & 0 & 0 & 0 & 0 & 0 & 1 \\
0 & 0 & 0 & 0 & 0 & 0 & 0 \\
\end{pmatrix};\quad x'=\begin{pmatrix} 0 & 0 & 0 & 0 & 0 & 0 & 0 \\ 
0 & 0 & 0 & 0 & 0 & 0 & 0 \\
0 & 0 & 0 & -1 & 0 & 0 & 0 \\
0 & 0 & 0 & 0 & -1 & 0 & 0 \\
0 & 0 & 0 & 0 & 0 & 0 & 0 \\
0 & 0 & 0 & 0 & 0 & 0 & -1 \\
0 & 0 & 0 & 0 & 0 & 0 & 0 \\
\end{pmatrix}. $$
Then we have that the Jordan type of $x$ is $\lambda=(4,2,1)$ and the type of $x'$ is $\lambda'=(3,2,1,1)$. If we let $v=u_2$, $v^*=-u_3^*$, we have indeed
$$ x+x'+v^*\otimes v=0.$$
Now, $x(v)=0$, therefore $k[x]v=\seq{v}$ and $V/k[x]v\simeq\seq{u_i|i\neq 2}$. We then have that the type of $x|_{V/k[x]v}$ is $\theta=(3,2,1)$.

Let us define the flag $F$ by
\begin{align*}F_1 &= \seq{u_2,u_6} \\
              F_2 &= \seq{u_2,u_6,u_3,u_1+u_7} \\
              F_3 &= \seq{u_2,u_6,u_3,u_1+u_7,u_4} \\
              F_4 &= V. \end{align*}
Then $F\in\F^\mu_x$ for $\mu=(2,2,1,2)$. We also define $F'\in\F^{\mu'}_{x'}$, with $\mu'=(2,2,3)$, by
\begin{align*}F'_1 &= \seq{u_1,u_3} \\
              F'_2 &= \seq{u_1,u_3,u_4,u_6} \\
              F'_3 &= V. \end{align*}
The semistandard tableaux associated to $F$ and $F'$ are respectively
$$T=\young(1234,12,4) \qquad T'=\young(123,13,2,3)$$
In which $\gl_7$-orbit does the point $(F,F',v)$ lie?
The relative position of $F$ and $F'$ is $\omega\in\MM{(2,2,1,2)}{(2,2,3)}$ which we can see as a matrix or as an array
$$\omega=\begin{pmatrix} 0 & 1 & 1 \\ 1 & 0 & 1 \\ 0 & 1 & 0 \\ 1 & 0 & 1 \end{pmatrix}=
\begin{pmatrix} 1 & 1 & 2 & 2 & 3 & 3 & 3 \\ 4 & 2 & 3 & 1 & 4 & 2 & 1 \end{pmatrix} $$
and since $v\in F_1\setminus (F_1\cap F'_2)$, we have that $\beta=\{4,7\}$. 
So for $\tlo=(\omega,\beta)$, we have $y=(F,F',v,x,x',v^*)\in N^*\Omega_{\omega,\beta}$.
Now, by the mirabolic RSK correspondence of Definition \ref{mirrsk}, we have
$$(\omega,\beta)\to(\lambda,\theta,\lambda',T,T').$$
Let us verify that this is indeed the result we obtain when we apply the algorithm \ref{algo}.
Our input is
$$(\omega,\beta)=\left( \begin{pmatrix} 1 & 1 & 2 & 2 & 3 & 3 & 3 \\ 4 & 2 & 3 & 1 & 4 & 2 & 1 \end{pmatrix}, \{4,7\}\right).$$
To start, we set $T_0=T'_0=\emptyset$, $R=\young(89\yy\yya\yyb\yyc\yyd)$
\begin{itemize}
\item $1\notin\beta$.
$$R=\young(49\yy\yya\yyb\yyc\yyd)\qquad T_1=\young(8)\qquad T'_1=\young(1)$$
\item $2\notin\beta$.
$$R=\young(29\yy\yya\yyb\yyc\yyd)\qquad T_2=\young(4,8)\qquad T'_2=\young(1,1)$$
\item $3\notin\beta$.
$$R=\young(23\yy\yya\yyb\yyc\yyd)\qquad T_3=\young(49,8)\qquad T'_3=\young(12,1)$$
\item $4\in\beta$.
$$R=\young(23\yy\yya\yyb\yyc\yyd)\qquad T_4=\young(19,4,8)\qquad T'_4=\young(12,1,2)$$
\item $5\notin\beta$.
$$R=\young(234\yya\yyb\yyc\yyd)\qquad T_5=\young(19\yy,4,8)\qquad T'_5=\young(123,1,2)$$
\item $6\notin\beta$.
$$R=\young(234\yya\yyb\yyc\yyd)\qquad T_6=\young(12\yy,49,8)\qquad T'_6=\young(123,13,2)$$
\item $7\in\beta$.
$$R=\young(234\yya\yyb\yyc\yyd)\qquad T_7=\young(12\yy,19,4,8)\qquad T'_7=\young(123,13,2,3)$$
\item $T'=T'_7=\young(123,13,2,3)$ which agrees with what we had before.
\item Insert $R$ into $T_7$, get
$$T_{14}=\young(1234\yya\yyb\yyc\yyd,12\yy,49,8)$$
\item The shape of $T_{14}$ is $\nu=(8,3,2,1)$, so $\theta=(3,2,1)$ as we wanted. 
\item Removing all numbers greater than $7$ from $T_{14}$ we get
$$T=T^{(7)}_{14}=\young(1234,12,4).$$
\end{itemize}

\subsection*{Acknowledgements}
The author would like to thank Victor Ginzburg for posing the problem and for his help and suggestions, and Jonah Blasiak for useful discussions on the subject. In addition, he would like to thank Joel Kamnitzer for pointing out the result in \cite{H} and Anthony Henderson for pointing out the reference \cite{Sp1}. He also thanks Sergey Fomin for some useful feedback. Finally, he is  grateful to the University of Chicago for support.

\end{document}